\edef\savecatcodeat{\the\catcode`@}
\def\tb@ifSpecChars#1#2{#1}
\def\tb@ifNoSpecChars#1#2{#2}
\def\tableau{%
  \bgroup
  \@ifstar{\let\Tif\tb@ifNoSpecChars\tb@tableauB}
          {\let\Tif\tb@ifSpecChars\tb@tableauB}}
\def\tb@tableauB{
  \@ifnextchar[{\tb@tableauC}{\tb@tableauC[]}}
\def\tb@tableauC[#1]{\hbox\bgroup%
    \let\\=\cr
    \def\bl{\global\let\tbcellF\tb@cellNF}%
    \def\tf{\global\let\tbcellF\tb@cellH}
%
    \dimen2=\ht\strutbox \advance\dimen2 by\dp\strutbox%
    \ifx\baselinestretch\undefined\relax%
    \else%
       \dimen0=100sp \dimen0=\baselinestretch\dimen0%
       \dimen2=100\dimen2 \divide\dimen2 by\dimen0%
    \fi%
    \let\tpos\tb@vcenter
    \tb@initYoung
    \tb@options#1\eoo
    \let\arrow\tb@arrow%
    \dimen0=\Tscale\dimen2%
    \dimen1=\dimen0 \advance\dimen1 by \tb@fframe%
    \lineskip=0pt\baselineskip=0pt
%
    \def\tb@nothing{}%
    \def\endcellno{$\rss\egroup\bss\egroup}
    \def\endcell{\endcellno\kern-\dimen0}
    \def\begincell{\vbox to\dimen0\bgroup\vss\hbox to\dimen0\bgroup\hss$}%
    \let\overlay\tb@overlay%
    \let\fl\tb@fl%
    \let\lss\hss\let\rss\hss\let\tss\vss\let\bss\vss
    \def\mkcell##1{
        \let\tbcellF\tb@cellD
        \def\tb@cellarg{##1}
        \ifx\tb@cellarg\tb@nothing\let\tb@cellarg\tb@cellE\fi%
        \begincell\tb@cellarg\endcellno
        \tbcellF}
    \let\savecellF\tbcellF
     \Tif{\catcode`,=4\catcode`|=\active}{}\tb@tableauD}%
\let\tb@savetableauD\tableauD
\gdef\tableauD#1{%
  \Tif{
    \mathcode`|="8000 \mathcode`*="8000%
    \mathcode`~="8000 \mathcode`@="8000%
    \def@{\bullet}%
    \let|\cr
    \let*\tf
    \let~\sk
  }{}%
  \tpos{\tabskip=0pt\halign{&\mkcell{##}\cr#1\crcr}}%
  \global\let\tbcellF\savecellF
  \egroup
  \egroup}
\let\tb@tableauD\tableauD
\let\tableauD\tb@savetableauD
\let\tb@savetableauD\undefined
\def\tb@options#1{\ifx#1\eoo\relax\else\tb@option#1\expandafter\tb@options\fi}
\def\tb@option#1{%
  \if#1t\let\tpos\tb@vtop\fi
  \if#1c\let\tpos\tb@vcenter\fi
  \if#1b\let\tpos\vbox\fi
  \if#1F\tb@initFerrers\fi
  \if#1Y\tb@initYoung\fi
  \if#1s\tb@initSmall\fi
  \if#1m\tb@initMedium\fi
  \if#1l\tb@initLarge\fi
  \if#1p\tb@initPartition\fi
  \if#1a\tb@initArrow\fi
}
\def\tb@vcenter#1{\ifmmode\vcenter{#1}\else$\vcenter{#1}$\fi}
\def\tb@vtop#1{\hbox{\raise\ht\strutbox\hbox{\lower\dimen0\vtop{#1}}}}
\def\tb@initPartition{\def\Tscale{.3}}
\def\tb@initSmall{\def\Tscale{1}}
\def\tb@initMedium{\def\Tscale{2}}
\def\tb@initLarge{\def\Tscale{3}}
\def\tb@initArrow{\dimen2=1.25em}
\def\tb@initYoung{%
  \def\tb@cellE{}
  \let\tb@cellD\tb@cellN
  \def\sk{\global\let\tbcellF\tb@cellNF}}
\def\tb@initFerrers{%
  \def\tb@cellE{\bullet}
  \let\tb@cellD\tb@cellNF
  \def\sk{\bullet}}
\def\tb@sframe#1{%
  \vbox to0pt{
    \vss
    \hbox to0pt{%
      \hss
      \vbox to\dimen1{
        \hrule depth #1 height0pt
        \vss
        \hbox to\dimen1{
          \vrule width #1 height\dimen1
          \hss
          \vrule width #1
          }%
        \vss
        \hrule height #1 depth 0in
        }%
      \kern-\tb@hframe
      }%
    \kern-\tb@hframe}}
\def\tb@hframe{.2pt}\def\tb@fframe{.4pt}\def\tb@bframe{2pt}
\def\tb@cellH{\tb@sframe{\tb@bframe}}       
\def\tb@cellNF{}                            
\def\tb@cellN{\tb@sframe{\tb@fframe}}       
\let\tbcellF\tb@cellN                       
\def\tb@rpad{1pt}
\def\tb@lpad{1pt}
\def\tb@tpad{1.8pt}
\def\tb@bpad{1.8pt}
\def\tb@overlay{\endcell\@ifnextchar[{\tb@overlaya}{\begincell}}
\def\tb@overlaya[#1]{\vbox to\dimen0\bgroup%
  \tb@overlayoptions#1\eoo%
  \tss\hbox to\dimen0\bgroup\lss$}
\def\tb@overlayoptions#1{\ifx#1\eoo\relax\else\tb@overlayoption#1\expandafter\tb@overlayoptions\fi}
\def\tb@overlayoption#1{
  \if#1t\def\tss{\vskip\tb@tpad}\let\bss\vss\fi
  \if#1c\let\tss\vss\let\bss\vss\fi
  \if#1b\def\bss{\vskip\tb@bpad}\let\tss\vss\fi
  \if#1l\def\lss{\hskip\tb@lpad}\let\rss\hss\fi
  \if#1m\let\lss\hss\let\rss\hss\fi
  \if#1r\def\rss{\hskip\tb@rpad}\let\lss\hss\fi
}
\def\tb@fl{\endcell\begincell\vrule depth 0pt width \dimen0 height \dimen0 \endcell\begincell}
\def\tb@arrowpad{.5}
\newoptcommand{\tb@arrow}{\@ne}[2]{%
  \endcell
   \begingroup%
   \let\dg@getnodesize\tb@getnodesize
   \dg@USERSIZE=#1\relax%
   \ifnum\dg@USERSIZE<\@ne \dg@USERSIZE=\@ne \fi%
   \dg@parse{#2}%
   \dg@label{\tb@draw{#1}{#2}}}
\def\tb@getnodesize#1#2#3#4#5{\dimen3=\tb@arrowpad\dimen2 #4=\dimen3 #5=\dimen3\relax}
\def\tb@getnodesize#1#2#3#4#5{\ifnum#2=0\ifnum#3=0\tb@getnodesizetail{#4}{#5}\else\tb@getnodesizehead{#4}{#5}\fi\else\tb@getnodesizehead{#4}{#5}\fi}
\def\tb@getnodesizetail#1#2{\dimen3=.5\dimen2 #1=\dimen3 #2=\dimen3}
\def\tb@getnodesizehead#1#2{\dimen3=.5\dimen2 #1=\dimen3 #2=\dimen3}
\def\tb@draw#1#2#3#4{%
        \dg@X=0\dg@Y=0\dg@XGRID=1\dg@YGRID=1\unitlength=.001\dimen0%
        \dg@LBLOFF=\dgLABELOFFSET \divide\dg@LBLOFF\unitlength%
        \dg@drawcalc
        \begincell
        \let\lams@arrow\tb@lams@arrow
        \begin{picture}(0,0)\begingroup\dg@draw{#1}{#2}{#3}{#4}\end{picture}%
        \endcell
        \endgroup
        \begincell}
\def\tb@lams@arrow#1#2{%
 \lams@firstx\z@\lams@firsty\z@
 \lams@lastx#1\relax\lams@lasty#2\relax
 \lams@center\z@
 %
 \N@false\E@false\H@false\V@false
 \ifdim\lams@lastx>\z@\E@true\fi
 \ifdim\lams@lastx=\z@\V@true\fi
 \ifdim\lams@lasty>\z@\N@true\fi
 \ifdim\lams@lasty=\z@\H@true\fi
 \NESW@false
 \ifN@\ifE@\NESW@true\fi\else\ifE@\else\NESW@true\fi\fi
 %
 \ifH@\else\ifV@\else
  \lams@slope
  \ifnum\lams@tani>\lams@tanii
   \lams@ht\ten@\p@\lams@wd\ten@\p@
   \multiply\lams@wd\lams@tanii\divide\lams@wd\lams@tani
  \else
   \lams@wd\ten@\p@\lams@ht\ten@\p@
   \divide\lams@ht\lams@tanii\multiply\lams@ht\lams@tani
  \fi
 \fi\fi
 %
 \ifH@  \lams@harrow
 \else\ifV@ \lams@varrow
 \else \lams@darrow
 \fi\fi
}
\let\savecatcodeat\undefined
\numberwithin{equation}{section}
\renewcommand{\subsubsection}{\@startsection
{subsubsection} {3} {0mm} {\baselineskip} {-0.5\baselineskip} {\normalfont\normalsize\bfseries}} \makeatother
\newtheorem{theorem}{Theorem}
\newtheorem{lemma}[theorem]{Lemma}
\newtheorem{proposition}[theorem]{Proposition}
\newtheorem{conjecture}[theorem]{Conjecture}
\newtheorem{remark}[theorem]{Remark}
\newtheorem*{acknow}{Acknowledgments}
\def\la{{\lambda}}
\def\cal L{{\mathcal L}}
\newcommand{\La}{\Lambda}
\def \part {\vdash}
\def\La {\Lambda}
\def \Om {\Omega}
\def \ta {\theta}
\def\cd{\circledast}
\newcommand{\tcercle}[1]{\ensuremath{\setlength{\unitlength}{1ex}\begin{picture}(2.8,2.8)\put(1.4,1.4){\circle{2.8}\makebox(-5.6,0){#1}}\end{picture}}}
\newcommand{\cerclep}{\ensuremath{\setlength{\unitlength}{1ex}\begin{picture}(2.8,2.8)\put(1.4,1.4){\circle*{2.7}\makebox(-5.6,0)}\end{picture}}}
\begin{document}

\title[Pieri rules for the Jack polynomials in superspace]
{Pieri rules for the Jack polynomials in superspace and the 6-vertex model}

\author{Jessica Gatica }
\address{Facultad de Matem\'aticas, Pontificia Universidad Cat\'olica de Chile, Casilla 306, Correo 22, Santiago, Chile} \email{jegatiga@mat.uc.cl}

\author{Miles Jones }
\address{Department of Computer Science, University of California, San Diego, 9500 Gilman Drive 0404, La Jolla, CA 92093, USA}
\email{mej016@ucsd.edu}

\author{Luc Lapointe}
\address{Instituto de Matem\'atica y F\'{\i}sica, Universidad de Talca, Casilla 747, Talca,
Chile} \email{lapointe@inst-mat.utalca.cl}


\begin{abstract}  We present Pieri rules for the Jack polynomials in superspace.  
 The coefficients in the Pieri rules are,  except for an extra determinant,
 products of quotients of linear factors in $\alpha$ (expressed,   
 as in the usual Jack polynomial case, in terms of certain hook-lengths in a Ferrers' diagram).  We show that, surprisingly, the extra determinant is related to the partition function of the 6-vertex model.  We give, as a conjecture, the Pieri rules for the Macdonald polynomials in superspace.
\end{abstract}

\keywords{Jack polynomials, Pieri rules, symmetric functions in superspace, 6-vertex model}

\maketitle

\section{Introduction}

The Jack polynomials in superspace, $P_\Lambda^{(\alpha)}$, have been introduced  in connection with the 
supersymmetric version of the  Calogero-Sutherland model \cite{FM}.
They are indexed by superpartitions (see Section~\ref{secdef} for the relevant definitions) and depend on the anticommuting
variables $\theta_1,\theta_2,\dots$ as well as the usual variables $z_1,z_2,\dots$.
Most of the
fundamental properties of the Jack polynomials in superspace  have already been established \cite{DLM2,DLM3}.  These
properties, such as the norm, specialization or duality, generalize beautifully those of the usual Jack polynomials
in superspace.
A notable exception up until now was that of the Pieri rules, for which there was not even a conjectured formula.\footnote{We should mention that special cases were considered in \cite{Br}.}  To be more precise,
let the Pieri coefficients for the Jack polynomials in superspace be defined by
(we refer to Section~\ref{secdef} for the relevant definitions)
\begin{equation} \label{pierifirst}
e_n \, P_\Lambda^{(\alpha)} = \sum_\Omega v_{\Lambda \Omega}(\alpha) \, P_\Omega^{(\alpha)} 
\qquad {\rm and} \qquad \tilde e_n \, P_\Lambda^{(\alpha)} = \sum_\Omega \tilde v_{\Lambda \Omega}(\alpha) \, P_\Omega^{(\alpha)} 
\end{equation}
The Pieri coefficients
$v_{\Lambda \Omega}(\alpha)$  and $\tilde v_{\Lambda \Omega}(\alpha)$ happen to be much more complicated than in the usual Jack polynomials case due to the presence in some cases of a non-linear factor.  For instance, given the superpartitions
\begin{equation} \label{spartsex} 
{\Lambda = \tiny{{{\tableau*[scY]{  & & &&&&\bl  \bigcirc    \cr & &  &&
 \cr  &&&&\bl  \bigcirc  \cr  &&&\bl\bigcirc\cr&\cr \cr }}}}
\qquad {\rm and }\qquad
\Omega = \tiny{{{\tableau*[scY]{  & & &&&&   \cr & &  && & \bl  \bigcirc 
 \cr  &&&& \cr  &&&  \cr&&\bl  \bigcirc \cr \cr\bl  \bigcirc  }}}}  }
\end{equation}
the coefficient
of $P_\Omega^{(\alpha)}$ in the product
$e_3 P_\Lambda^{(\alpha)}$ is given by 
\begin{equation} \label{ugly}
  {\frac{1}{1152}\frac{\alpha^4(2\alpha+3)(3\alpha+4){(416\alpha^6+2000\alpha^5+3484\alpha^4+2608\alpha^3+559\alpha^2-256\alpha-108)}}
    {(4\alpha+3)(5\alpha+4)(7\alpha+6)(2\alpha+1)(\alpha+1)^{10}}}
\end{equation}
Understanding the non-linear factor thus appears at first glance to be a daunting task.

It was first determined in \cite{DLM3} that
$v_{\Lambda \Omega}(\alpha)$ (resp. $\tilde v_{\Lambda \Omega}(\alpha)$) is not equal to zero only if
$\Omega/\Lambda$ is a vertical $n$-strip (resp. vertical $\tilde n$-strip).  The main goal of this paper  is to
obtain genuine Pieri rules for the Jack polynomials in superspace, that is, 
to provide explicit formulas for the  coefficients,
$v_{\Lambda \Omega}(\alpha)$  and $\tilde v_{\Lambda \Omega}(\alpha)$ (see Theorem~\ref{theo}).
Remarkably, the non-linear factor described previously turns out
to be a determinant related to the partition function of the 6-vertex model in statistical mechanics while the remaining linear factors can be expressed,   
as in the usual Jack polynomial case, in terms of certain hook-lengths in
a Ferrers' diagram.

The proof of the Pieri rules will be done by induction using the relations $\{\tilde e_{n-1},q^\perp\} = n \, e_{n}$ and
$\tilde e_0 e_{n} -[Q,e_{n}]= \tilde e_{n}$, where  $q^\perp$ and $Q$ are operators that belong the negative-half of the super Virasoro algebra.
The inductive process is possible because the explicit action of $\tilde e_0, q^\perp$ and $Q$
on Jack polynomials in superspace was obtained in \cite{DLM4}.
Dual Pieri rules involving $\alpha$-deformations of the homogeneous symmetric functions in superspace then follow immediately from a duality property of the Jack polynomials in superspace (see Theorem~\ref{theodual}).

The Pieri rules for the Jack polynomials in superspace appear to have a natural extension to the Macdonald polynomials in superspace
(see Conjecture~\ref{conj}).
The connection with the partition function of the 6-vertex model is even more natural in the
Macdonald case since the non-linear factor becomes the Izergin-Korepin determinant after a simple change of variables
(to obtain the Jack case, we have to then do the limit $q=t^\alpha$, $t\to 1$). 
We should mention that in the non-supersymmetric case a different connection between Macdonald polynomials and the partition function of the 6-vertex model was observed in \cite{Wa} when studying a bisymmetric function 
introduced in \cite{KN} and appearing in the action of the Macdonald operators acting on the Cauchy kernel.  We are hopeful that this new connection will provide further insight into the combinatorics of alternating sign matrices.

Here is the outline of the article.  After giving in Section~\ref{secdef} the necessary background on symmetric function theory in superspace,
we present the Pieri rules for the Jack polynomials in superspace in Section~\ref{SecPieri}, as well as the main idea of the proof.  The 
technical details of the proof are then provided in Section~\ref{secproofs} which forms the bulk of the article.
The conjectured Pieri rules for the Macdonald polynomials in superspace are given in Section~\ref{secMac} while  the connection with the Izergin-Korepin determinant related to the partition functions of the
6-vertex model is made in Section~\ref{sec6vertex}.  Finally, we give explicitly the dual Pieri rules involving $\alpha$-deformations of the homogeneous symmetric functions in superspace in Appendix~\ref{SecPieridual}.

\section{Definitions} \label{secdef}

A polynomial {in} superspace, or equivalently, a superpolynomial, is
a polynomial in the usual $N$ variables $z_1,\ldots ,z_N$  and the $N$ anticommuting variables $\ta_1,\ldots,\ta_N$ over a certain field, which will be taken in the remainder of this article to be $\mathbb Q$.  
A superpolynomial $P(z,\theta)$,
with $z=(z_1,\ldots,z_N)$ and $\theta=(\theta_1,\ldots,\theta_N)$, is said to be symmetric if the following is satisfied:
\begin{equation}
P(z_1,\dots,z_N,\theta_1,\dots,\theta_N) = P(z_{\sigma(1)},\dots,z_{\sigma(N)},\theta_{\sigma(1)},\dots,\theta_{\sigma(N)}) \qquad 
\forall \, \sigma \in S_N
\end{equation}
where $S_N$ is the symmetric group on $\{1,\dots, N \}$.  The ring of superpolynomials in $N$ variables has a natural grading with respect to the fermionic 
degree $m$ (the total degree in the anticommuting variables).
We will denote
by $\Lambda^m_N$ the ring of symmetric superpolynomials in $N$ variables and fermionic degree $m$ over the field $\mathbb Q$.

\subsection{Superpartitions}
We  first recall some definitions
related to partitions \cite{Mac}.
A partition $\lambda=(\lambda_1,\lambda_2,\dots)$ of degree $|\lambda|$
is a vector of non-negative integers such that
$\lambda_i \geq \lambda_{i+1}$ for $i=1,2,\dots$ and such that
$\sum_i \lambda_i=|\lambda|$.  
Each partition $\lambda$ has an associated Ferrers diagram
with $\lambda_i$ lattice squares in the $i^{th}$ row,
from the top to bottom. Any lattice square in the Ferrers diagram
is called a cell (or simply a square), where the cell $(i,j)$ is in the $i$th row and $j$th
column of the diagram.  
The conjugate $\lambda'$ of  a partition $\lambda$ is represented  by
the diagram
obtained by reflecting  $\lambda$ about the main diagonal.
We say that the diagram $\mu$ is contained in $\la$, denoted
$\mu\subseteq \la$, if $\mu_i\leq \la_i$ for all $i$.  Finally,
$\la/\mu$ is a horizontal (resp. vertical) $n$-strip if $\mu \subseteq \lambda$, $|\lambda|-|\mu|=n$,
and the skew diagram $\la/\mu$ does not have two cells in the same column
(resp. row). 

Symmetric superpolynomials  are naturally indexed by superpartitions. A superpartition $\Lambda$ of
degree $(n|m)$, or  $\Lambda \vdash (n|m)$ for short, 
 is a pair $(\Lambda^\circledast,\Lambda^*)$ of partitions
$\Lambda^\circledast$ and $\Lambda^*$ such
 that:\\ \\
 1. $\Lambda^* \subseteq \Lambda^\circledast$;\\
 2.  the degree of $\Lambda^*$ is $n$;\\
 3.  the skew diagram $\Lambda^\circledast/\Lambda^*$
is both a horizontal and a vertical $m$-strip\footnote{Such diagrams are sometimes called $m$-rook strips.}\\ \\
We refer to  $m$ and $n$ respectively as the fermionic degree 
and total degree 
of $\La$. 
 Obviously, if
$\Lambda^\circledast= \Lambda^*=\lambda$,
then $\Lambda=(\lambda,\lambda)$ can be interpreted as the partition
$\lambda$.

We will also need another characterization of a superpartition.
 A superpartition $\La$ is 
a pair of partitions $(\La^a; \La^s)=(\La_{1},\ldots,\La_m;\La_{m+1},\ldots,\La_N)$, 
 where $\La^a$ is a partition with $m$ 
distinct parts (one of them possibly  equal to zero),
and $\La^s$ is an ordinary partition (with possibly a string of zeros at the end).   The correspondence 
between $(\Lambda^\circledast,\Lambda^*)$ and 
$(\Lambda^a; \Lambda^s)$ is given explicitly as follows:
given 
$(\Lambda^\circledast,\Lambda^*)$, the parts of $\Lambda^a$ correspond to the
parts of $\Lambda^*$ such that $\Lambda^{\circledast}_i\neq 
\Lambda^*_i$, while the parts of $\Lambda^s$ correspond to the
parts of $\Lambda^*$ such that $\Lambda^{\circledast}_i=\Lambda^*_i$.

The conjugate of a superpartition 
$\Lambda=(\Lambda^\circledast,\Lambda^*)$ is $\Lambda'=((\Lambda^\circledast)',(\Lambda^*)')$.
A diagrammatic representation of $\La$ is given by 
the Ferrers diagram of $\La^*$ with circles added in the cells corresponding
to $\Lambda^{\circledast}/\Lambda^*$.
For instance, if $\La=(\Lambda^a;\Lambda^s)
=(3,1,0;2,1)$,  we have $\Lambda^\circledast=(4,2,2,1,1)$ and $\Lambda^*
=(3,2,1,1)$, so that 
\begin{equation} \label{exdia}
{\scriptsize     \La^\cd:\quad{\tableau[scY]{&&&\\&\\&\\\\ \\ }} \quad
         \La^*:\quad{\tableau[scY]{&&\\&\\ \\ \\ }} \quad
 \Longrightarrow\quad      \La:\quad {\tableau[scY]{&&&\bl\tcercle{}\\&\\&\bl\tcercle{}\\ \\
    \bl\tcercle{}}} \quad    \La':\quad {\tableau[scY]{&&&&\bl\tcercle{}\\&&\bl\tcercle{}\\ \\
    \bl\tcercle{}}},}
\end{equation}
where the last diagram illustrates the conjugation operation that corresponds, as usual, to replacing rows by columns.

We say that $\Omega/\Lambda$ is a horizontal (resp. vertical) $n$-strip if $\Omega^*/\Lambda^*$ and $\Omega^\circledast/\Lambda^\circledast$
are both horizontal (resp. vertical) $n$-strips. Similarly, we say that $\Omega/\Lambda$ is a horizontal
 (resp. vertical) $\tilde n$-strip if $\Omega^*/\Lambda^*$ 
and $\Omega^\circledast/\Lambda^\circledast$ are respectively
a horizontal (resp. vertical) $n$-strip and a horizontal (resp. vertical) $(n+1)$-strip.  For example, using 
\begin{equation}
\Lambda= \, \, {\scriptsize
\tableau[scY]{  &&&&\\ &&& \\  &&& \bl \tcercle{}\\ &&  \\ & \bl \tcercle{} \\ } } \qquad {\rm and} \qquad \Omega= \, \, {\scriptsize \tableau[scY]{  &&&&\\ &&& & \\  &&& & \bl \tcercle{} \\ &&&  \\ & \bl \tcercle{} \\  \bl \tcercle{}}}
\end{equation}
we have that $\Omega/\Lambda$ is a vertical $\tilde 3$-strip since
\begin{equation} \label{exstrip}
  \Omega^*/\Lambda^* = \, \,
  {\scriptsize \tableau[scY]{ \fl &\fl&\fl&\fl&\fl\\ \fl&\fl&\fl&\fl &  \\  \fl&\fl&\fl&  \\ \fl&\fl&\fl &  \\ \fl  \\  } } 
 \qquad {\rm and} \qquad \Omega^\circledast/\Lambda^\circledast = \,  \, {\scriptsize \tableau[scY]{  \fl&\fl&\fl&\fl&\fl\\ \fl&\fl&\fl&\fl & \\  \fl&\fl&\fl&\fl & \\ \fl&\fl&\fl & \\\fl &\fl \\  \\}}
 \end{equation}
form a vertical $3$-strip and a vertical $4$-strip respectively.

From the dominance ordering on partitions
\begin{equation}\label{ordre}
   \mu \leq \la\quad\text{ iff }\quad |\mu|=|\la|\quad\text{
and }\quad \mu_1 + \cdots + \mu_i \leq \lambda_1 + \cdots + \lambda_i\quad \forall i \, . \end{equation}
we define the dominance ordering on superpartitions as
\begin{equation} \label{eqorder1}
 \Omega\leq\Lambda \quad \text{iff}\quad
 \deg(\La)=\deg(\Om) ,
 \quad \Omega^* \leq \Lambda^*\quad \text{and}\quad
\Omega^{\circledast} \leq  \Lambda^{\circledast}
\end{equation}
where we stress that the order on partitions is the dominance ordering.

\subsection{Simple bases}
Four simple bases  of the space of 
symmetric polynomials in superspace will be particularly relevant to our work \cite{DLM1}:
\begin{enumerate}\item 
the generalization of the monomial symmetric 
functions, $m_\La$, defined by 
\begin{equation}
m_\La={\sum_{\sigma \in S_N} }' \theta_{\sigma(1)}\cdots\theta_{\sigma(m)} z_{\sigma(1)}^{\La_1}\cdots z_{\sigma(N)}^{\La_N},
\end{equation}
where the sum is over the  permutations of $\{1,\ldots,N\}$ that produce distinct terms, and where
the entries of $(\Lambda_1,\dots,\Lambda_N)$ are those of $\Lambda=(\La^a; \La^s)=(\La_{1},\ldots,\La_m;\La_{m+1},\ldots,\La_N)$;
\item 
 the generalization of the power-sum 
symmetric
functions $p_\La=\tilde{p}_{\La_1}\cdots\tilde{p}_{\La_m}p_{\La_{m+1}}\cdots p_{\La_{\ell}}\, $
\begin{equation} 
\text{~~~{with}~~~}
\tilde{p}_k=\sum_{i=1}^N\theta_iz_i^k\qquad\text{and}\qquad p_r=
\sum_{i=1}^Nz_i^r \, , \text{~~~~for~~} k\geq 0,~r \geq 1
\end{equation}  
where $\ell$ is the length of the partition $\Lambda^{\circledast}$; 

\item 
 the generalization of the elementary 
symmetric
functions $e_\La=\tilde{e}_{\La_1}\cdots\tilde{e}_{\La_m}e_{\La_{m+1}}\cdots e_{\La_{\ell}}$ ,

\begin{equation} \text{~~~where~~~} \tilde{e}_k=m_{(0;1^k)}
\quad \text{and} \quad e_r=m_{(\emptyset;1^r)}
, \text{~~~~for~~} k\geq 0,~r \geq 1;
\end{equation}  
\item 
 the generalization of the homogeneous
symmetric
functions  $h_\La=\tilde{h}_{\La_1}\cdots\tilde{h}_{\La_m}h_{\La_{m+1}}\cdots h_{\La_{\ell}}\, ,$
\begin{equation} \label{defhomogeneous}
  \text{~~~where~~~}  \tilde{h}_k=\sum_{\Lambda \vdash (n|1)} (\Lambda_1+1)m_{\Lambda}
\qquad \text{and} \qquad h_r=\sum_{\Lambda \vdash (n|0)}m_{\Lambda}, 
 \text{~~~~for~~} k\geq 0,~r \geq 1
\end{equation}  
\end{enumerate}
Observe that when $\Lambda=(\emptyset; \lambda)$, we have that $m_\Lambda=m_\lambda$,  $p_\Lambda=p_\lambda$,  $e_\Lambda=e_\lambda$ and
 $h_\Lambda=h_\lambda$ are respectively the usual monomial, power-sum, elementary and homogeneous symmetric functions.  Also note
that if we define the operator $d=\theta_1 {\partial}/{\partial_{z_1}}+\cdots + \theta_N \partial/\partial_{z_N}$, we have
\begin{equation}
(k+1) \, \tilde p_k = d ( p_{k+1})\, , \qquad  \tilde e_k = d (e_{k+1})  \quad \text{and} \quad \tilde h_k = d(h_{k+1})  
\end{equation}
that is, the new generators in the superspace versions of the bases can be obtained from acting with $d$ on the generators of the usual symmetric function versions.

\subsection{Jack and Macdonald polynomials in superspace}

The Jack polynomials in superspace $\{P_{\Lambda}^{(\alpha)}\}_{\Lambda}$ can be defined as the unique basis of the space
of symmetric functions in superspace such that
\begin{equation} \label{defjack}
  \begin{split}
& P_{\Lambda}^{(\alpha)} = m_\Lambda + \text{smaller terms} \\
& \langle \langle P_{\Lambda}^{(\alpha)}, P_{\Omega}^{(\alpha)} \rangle \rangle_\alpha = 0 \quad \text{ if } \Lambda \neq \Omega
  \end{split}
  \end{equation}
where the scalar product $\langle \langle \cdot, \cdot \rangle \rangle_\alpha$ is defined on power-sum symmetric functions as
\begin{equation}
\langle \langle p_{\Lambda}, p_{\Omega} \rangle \rangle_\alpha = \delta_{\Lambda \Omega}  \, \alpha^{\ell(\Lambda^s)} z_{\Lambda^s}\qquad \qquad 
z_\lambda =\prod_{i\geq 1} i^{n_i(\lambda)} n_i(\lambda)!
\end{equation}  
with $m$ the fermionic degree of $\Lambda$ and $n_i(\lambda)$ the number of parts equal to $i$ in the partition $\lambda$.

The Macdonald polynomials in superspace $\{P_{\Lambda}^{(q,t)}\}_{\Lambda}$ can be defined similarly by replacing  the scalar product 
$\langle \langle \cdot, \cdot \rangle \rangle_\alpha$ in \eqref{defjack}
by the scalar product $\langle \langle \cdot, \cdot \rangle \rangle_{q,t}$ such that
\begin{equation}
\langle \langle p_{\Lambda}, p_{\Omega} \rangle \rangle_{q,t} = \delta_{\Lambda \Omega} \, q^{|\Lambda^a|} \, z_{\Lambda^s} \prod_{i=1}^{\ell(\Lambda^s)} \frac{1-q^{\Lambda^s_i}}{1-t^{\Lambda^s_i}}
\end{equation}

\section{Pieri rules} \label{SecPieri}
Before describing the Pieri coefficients $v_{\Lambda \Omega}(\alpha)$ and $\tilde v_{\Lambda \Omega}(\alpha)$
defined in \eqref{pierifirst}, we first need to establish some notation.  When describing the vertical strips $\Omega/\Lambda$, we will
denote
\begin{itemize}
\item  the squares of $\Lambda$ (the preexisting squares) by ${\footnotesize \tableau[scY]{ \\   }}$ 
\item  the squares of $\Omega/\Lambda$ that 
do not lie in a circle of $\Lambda$ (the new squares) by ${\footnotesize  \tableau[scY]{ \fl }}$ 
\item  the squares of $\Omega/\Lambda$
that lie over a circle of $\Lambda$ (the bumping squares) by  ${\footnotesize \tableau[scY]{\tf \ocircle}}$ 
\item  the circles of 
$\Lambda$ that are still circles in $\Omega$  (the preexisting circles) by 
 ${\footnotesize \tableau[scY]{\bl \tcercle{}} }$
\item  the circles of $\Omega$ that were not circles in
$\Lambda$ (the new circles) by  ${\footnotesize \tableau[scY]{\bl \cerclep} }$ 
\end{itemize}
For instance, the cells of the vertical strip $\Omega/\Lambda$ given in \eqref{exstrip} are described by
\begin{equation} \label{excells}
{\small \tableau[scY]{  &&&&\\ &&& & \fl \\  &&& \tf \ocircle & \bl \cerclep \\ &&& \fl  \\ & \bl \tcercle{} \\  \bl \cerclep}}
\end{equation}

As is the case for the Pieri rules for the Jack polynomials, the contribution to the coefficients
$v_{\Lambda \Omega}(\alpha)$  and $\tilde v_{\Lambda \Omega}(\alpha)$  will come from the hook-lengths of the cells above 
the non-preexisting cells (the new squares, the bumping squares and the new circles).
Apart from a sign, the contributions will be of two types: a factorized part $\psi_{\Omega/\Lambda}'$
consisting of products of quotients of linear 
factors in $\alpha$ (just as in the usual case\footnote{Note that we use the notation of \cite{Mac} to describe the factorized part $\psi_{\Omega/\Lambda}'$.}) and a determinant ${\rm Det_{\Omega/\Lambda}}$
of a matrix built from the hook-lengths between certain
special cells.

We first describe the factorized part $\psi_{\Omega/\Lambda}' $.  We denote by
${\rm col}_{\Omega/\Lambda}$ the cells of $\Lambda^{\circledast}$ that belong to a column of $\Omega$ containing a non-preexisting cell.   We have that
\begin{equation}
\psi_{\Omega/\Lambda}'  = \prod_{s \in {\rm col}_{\Omega/\Lambda}} c_{\Omega/\Lambda}(s)
\end{equation}
where the contribution $c_{\Omega/\Lambda}(s)$
of a cell $s$ (whose column will contain for illustrative purposes the three possible types of non-preexisting cells) is equal to $A$, $B$, $C$, or 1 according to how its row ends:
\begin{equation} \label{sixcases}
{\footnotesize\tableau[scY]{ C  &   &\bl $\ldots$  & & \bl \tcercle{} \\  \\ \bl \vspace{-2ex}\vdots \\ \bl& \bl \\  & \bl\\   \tf \ocircle \\  \fl \\  \bl \cerclep     } \qquad  \footnotesize\tableau[scY]{ C  &   &\bl $\ldots$  & &  \\  \\ \bl \vspace{-2ex}\vdots \\ \bl& \bl \\  & \bl\\    \tf \ocircle \\  \fl \\  \bl \cerclep     } \qquad  \footnotesize\tableau[scY]{ A  &   &\bl $\ldots$  & & \bl \cerclep \\  \\ \bl \vspace{-2ex}\vdots \\ \bl& \bl \\  & \bl\\   \tf \ocircle \\  \fl \\  \bl \cerclep     } \qquad \footnotesize\tableau[scY]{ 1  &   &\bl $\ldots$  & & \fl \\  \\ \bl \vspace{-2ex}\vdots \\ \bl& \bl \\  & \bl\\    \tf \ocircle \\  \fl \\  \bl \cerclep    } \qquad  \footnotesize\tableau[scY]{ B  &   &\bl $\ldots$  & & \tf \ocircle \\  \\ \bl \vspace{-2ex}\vdots \\ \bl& \bl \\  & \bl\\    \tf \ocircle \\  \fl \\  \bl \cerclep    } \qquad  \footnotesize\tableau[scY]{ 1  &   &\bl $\ldots$  & & \tf \ocircle & \bl \cerclep  \\  \\ \bl \vspace{-2ex}\vdots \\ \bl& \bl \\  & \bl\\    \tf \ocircle \\  \fl \\  \bl \cerclep     } }
\end{equation}
where
\begin{equation}
A = h_{\Omega/\Lambda}^A(s)= \frac{h_\Lambda^{(\alpha)}(s)}{h_\Omega^{(\alpha)}(s)}, \quad 
B= h_{\Omega/\Lambda}^B(s) =\frac{h^\Omega_{(\alpha)}(s)}{h^\Lambda_{(\alpha)}(s)} \quad {\rm and}  \quad C= h_{\Omega/\Lambda}^C(s)=\frac{h_\Lambda^{(\alpha)}(s)
h^\Omega_{(\alpha)}(s)}{h_\Omega^{(\alpha)}(s)h^\Lambda_{(\alpha)}(s)}
\end{equation}
with
\begin{equation}
h_\Lambda^{(\alpha)}(s)= \ell_{\Lambda^\circledast}(s) + \alpha (a_{\Lambda^*}(s)+1) \qquad {\rm and} \qquad
h^\Lambda_{(\alpha)}(s)= \ell_{\Lambda^*}(s) + 1+ \alpha a_{\Lambda^\circledast}(s) 
\end{equation}
For instance, the contributions of the cells in the following vertical $2$-strip are
\begin{equation}
{\small \tableau[scY]{  1&&&1&1&\fl \\ A &&&A &\bl \cerclep \\  B&&&  \tf \ocircle \\ C&&  \\ C& \bl \tcercle{}  \\ \bl \cerclep}}
\end{equation}
Note that $C=AB$.
We observe that a new circle removes the contribution $B$ in the product $AB$ while a bumping square removes the contribution $A$.  Since a new square can be thought as a combination of a new circle and a bumping square, 
it removes
both contributions $A$ and $B$ from the product $AB$ (leaving 1 as a contribution).  
Note also that, with this rule, a bumping square lying above a new square or a new circle would contribute a factor $B$
to the coefficient  $v_{\Lambda \Omega}$ or $\tilde v_{\Lambda \Omega}$ if it does not have a new circle to its right  (note that $h^\Lambda_{(\alpha)}$ of an empty cell  is defined to be equal to 1).

We now describe the determinant ${\rm Det_{\Omega/\Lambda}}$.  We first do the case
$v_{\Lambda \Omega}$.  Label the bumping squares and new squares from top to bottom by $x_1,\dots,x_n$.  Similarly, 
label the new circles and new squares from top to bottom by $y_1,\dots, y_n$.  Observe that it is natural for a new square,
 being in some sense a  combination of a new circle and a bumping square, to get an
$x$ and a $y$ label. 
If $x_i$  (resp. $y_i$) labels a bumping square or a new square
(resp. a new circle or a new square)  in position  $(a,b)$, we then let $x_i$ (resp. $y_i$) take the value\footnote{Note that for simplicity we will not distinguish between the label $x_i$ and the value $x_i=\alpha b-a$.}
\begin{equation} \label{defab}
x_i= \alpha b-a \qquad ( \text{resp. } y_i= \alpha b-a )
\end{equation}
For example, the labels of the following vertical strip are given by
\begin{equation} \label{eq3.8}
{\small \tableau[scY]{   &&&&&&     \bl \cerclep  &\bl \, y_{1}\\ &&&&&\fl  & \bl \, x_{1} & \bl \, y_{2}  \\ && \tf \ocircle & \bl \cerclep &   \bl \, x_{2} & \bl  \, y_{3} \\ \\ \tf \ocircle  &  \bl \, x_{3} }}
\end{equation}
with for instance $x_1=y_2=6\alpha-2$, $x_2=3\alpha-3$ and $y_3=4\alpha-3$.

Finally, let
\begin{equation} \label{defxy}
[x_i;y_j]_{\alpha} :=
\left \{
\begin{array}{cl}
\dfrac{\alpha}{(x_{i}-y_{j}+\alpha)(x_{i}-y_{j})} & \text{~if there is no~} y  \text{~label in the row of}~x_i \\
1 & \text{~if~} x_i \text{~and~} y_j \text{~are in the same row} \\
0 & \text{~if there is a label~} y_\ell \neq y_j  \text{~in the row of~} x_i 
\end{array} \right .
\end{equation} 
We then have that 
\begin{equation}
{\rm Det_{\Omega/\Lambda}} =
\left|
\begin{array}{cccc}
\displaystyle{[x_1; y_1]_{\alpha}} & \displaystyle{[x_1; y_2]_{\alpha}}  &\cdots & \displaystyle{[x_1; y_n]_{\alpha}}  \\ 
\displaystyle{[x_2; y_1]_{\alpha}}  & \displaystyle{[x_2; y_2]_{\alpha}}  & \cdots & \displaystyle{[x_2; y_n]_{\alpha}}  \\
\vdots& \vdots& \ddots& \vdots \\
\displaystyle{[x_n; y_1]_{\alpha}}  & \displaystyle{[x_n; y_2]_{\alpha}}  & \cdots & \displaystyle{[x_n; y_n]_{\alpha}}  
\end{array}
\right| 
\end{equation}
If we consider the vertical strip in \eqref{eq3.8}, we have for instance
\begin{equation}
{\rm Det_{\Omega/\Lambda}} =
\left|
\begin{array}{cccc}
\displaystyle{[x_1; y_1]_{\alpha}} & \displaystyle{[x_1; y_2]_{\alpha}}  & \displaystyle{[x_1; y_3]_{\alpha}}  \\ 
\displaystyle{[x_2; y_1]_{\alpha}}  & \displaystyle{[x_2; y_2]_{\alpha}}  & \displaystyle{[x_2; y_3]_{\alpha}}  \\
\displaystyle{[x_3; y_1]_{\alpha}}  & \displaystyle{[x_3; y_2]_{\alpha}}  & \displaystyle{[x_3; y_3]_{\alpha}}  
\end{array}
\right| 
= {\left|
\begin{array}{ccc}
0  & 1  & 0 \\ 
0 & 0 & 1  \\
\frac{\alpha}{(5\alpha+4)(6\alpha+4)}  & \frac{\alpha}{(4\alpha+3)(5\alpha+3)}  & \frac{\alpha}{(2\alpha+2)(3\alpha+2)} 
\end{array}
\right| 
}
\end{equation}
We will show in Section~\ref{sec6vertex} that the extension of $
{\rm Det_{\Omega/\Lambda}}$ to the Macdonald case
(which gives back the Jack case in the
$q=t^\alpha$, $t\to 1$ limit)
happens to be a determinant related to the partition function of the 6-vertex model in statistical mechanics.

The case $\tilde v_{\Lambda \Omega}$ is quite similar.
 Label the bumping squares and new squares from top to bottom by $x_1,\dots,x_n$.  Label also the new circles and new squares from top to bottom by $y_1,\dots, y_{n+1}$ (the extra circle comes from
the fact that $\tilde e_n$ increases the fermionic degree).  
Using the notation introduced in \eqref{defab} and \eqref{defxy}, 
we have this time
\begin{equation}
{\rm Det_{\Omega/\Lambda}} =
\left|
\begin{array}{cccc}
1 & 1 & \cdots & 1 \\
\displaystyle{[x_1; y_1]_{\alpha}} & \displaystyle{[x_1; y_2]_{\alpha}}  &\cdots & \displaystyle{[x_1; y_{n+1}]_{\alpha}}  \\ 
\displaystyle{[x_2; y_1]_{\alpha}}  & \displaystyle{[x_2; y_2]_{\alpha}}  & \cdots & \displaystyle{[x_2; y_{n+1}]_{\alpha}}  \\
\vdots& \vdots& \ddots& \vdots \\
\displaystyle{[x_n; y_1]_{\alpha}}  & \displaystyle{[x_n; y_2]_{\alpha}}  & \cdots & \displaystyle{[x_n; y_{n+1}]_{\alpha}}  
\end{array}
\right| 
\end{equation}

We can now state our main theorem.
\begin{theorem} \label{theo} The Pieri rules for the Jack polynomials in superspace are given by
  \begin{equation} \label{eqPieriTheo}
e_n \, P_\Lambda^{(\alpha)} = \sum_\Omega v_{\Lambda \Omega}(\alpha) \, P_\Omega^{(\alpha)} 
\qquad {\rm and} \qquad \tilde e_n \, P_\Lambda^{(\alpha)} = \sum_\Omega \tilde v_{\Lambda \Omega}(\alpha) \, P_\Omega^{(\alpha)} 
  \end{equation}
  where the sum is over all $\Omega$'s such that $\Omega/\Lambda$ is  a vertical $n$-strip and
  a vertical $\tilde n$-strip  respectively.  Moreover,
\begin{equation} \label{eqPieriex}
v_{\Lambda \Omega}(\alpha)= (-1)^{\#(\Omega/\Lambda)}
\psi_{\Omega/\Lambda}' {\rm Det_{\Omega/\Lambda}}  \qquad {\rm and} \qquad
 \tilde v_{\Lambda \Omega}(\alpha)=  (-1)^{\#(\Omega/\Lambda)} \psi_{\Omega/\Lambda}' {\rm Det_{\Omega/\Lambda}} 
\end{equation}
where the quantity ${\#(\Omega/\Lambda)}$ 
stands for the sum 
of the number of preexisting circles and new squares  that lie above each new circle and each bumping square (in other words, it is the length of the shortest permutation necessary to move every new circle and bumping square above the preexisting circles and new squares). 
\end{theorem}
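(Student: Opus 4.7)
The plan is to establish Theorem~\ref{theo} by induction on $n$, alternating between the $e_n$ and $\tilde{e}_n$ formulas, in the order $\tilde{e}_0, e_1, \tilde{e}_1, e_2, \tilde{e}_2, \dots$. The key tools are the operator identities $\{\tilde{e}_{n-1}, q^\perp\} = n \, e_n$ and $\tilde{e}_0 \, e_n - [Q, e_n] = \tilde{e}_n$ announced in the introduction, together with the explicit actions of $\tilde{e}_0$, $q^\perp$, and $Q$ on the basis $\{P_\Lambda^{(\alpha)}\}$ from \cite{DLM4}. The base case is the $n=0$ instance of the $\tilde{e}_n$-formula: here there are no bumping or new squares, so ${\rm Det}_{\Omega/\Lambda}$ reduces to the $1 \times 1$ matrix $[1]$, and one only has to check that $(-1)^{\#(\Omega/\Lambda)} \psi'_{\Omega/\Lambda}$ matches the $\tilde{e}_0$-coefficients read off from \cite{DLM4}.

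For the inductive step, I would apply the first identity to $P_\Lambda^{(\alpha)}$:
\begin{equation*}
n \, e_n \, P_\Lambda^{(\alpha)} \,=\, \tilde{e}_{n-1} \bigl( q^\perp P_\Lambda^{(\alpha)} \bigr) + q^\perp \bigl( \tilde{e}_{n-1} P_\Lambda^{(\alpha)} \bigr).
\end{equation*}
Expanding the right-hand side via the inductive hypothesis for $\tilde{e}_{n-1}$ and the explicit $q^\perp$-action yields a double sum indexed by intermediate superpartitions $\Pi$, and reading off the coefficient of a chosen $P_\Omega^{(\alpha)}$ on both sides gives a combinatorial identity whose target value is $n \, v_{\Lambda \Omega}(\alpha)$ of the claimed form. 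The analogous step $e_n \to \tilde{e}_n$ uses the second identity in the same way, producing a sum of products of $\tilde{e}_0$-, $Q$-, and the just-established $e_n$-coefficients whose total must match the $(n+1) \times (n+1)$ determinant.

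The main obstacle is the identification of these iterated sums with $\psi'_{\Omega/\Lambda} \, {\rm Det}_{\Omega/\Lambda}$. I would decouple the verification into a factorized piece and a determinantal piece. The factorized piece $\psi'_{\Omega/\Lambda}$ is a product of hook-length ratios and should follow by the telescoping arguments familiar from the usual Jack Pieri rule (cf.\ \cite{Mac}) once the intermediate strip $\Pi/\Lambda$ has been identified. The determinantal piece should be extracted by a Laplace/cofactor expansion of ${\rm Det}_{\Omega/\Lambda}$ along the row indexed by a canonically chosen extremal bumping or new square (e.g.\ the bottommost): each minor should correspond to the determinant attached to a valid intermediate $\Pi$, while the cofactor weight should reproduce the $q^\perp$- or $Q$-eigenvalue governing the transition $\Lambda \to \Pi$. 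The vanishing rule in the definition \eqref{defxy} of $[x_i; y_j]_\alpha$ is crucial here, as it kills exactly those terms of the expansion that fail to correspond to a legitimate vertical strip.

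Finally, the sign $(-1)^{\#(\Omega/\Lambda)}$ must be tracked through the recursion as an inversion count: each additional new circle or bumping square introduced by applying $q^\perp$ or $Q$ contributes a predictable number of transpositions past the preexisting circles and new squares, so consistency of signs amounts to a bookkeeping lemma. The most delicate point is that the anticommutator structure of $\{\tilde{e}_{n-1}, q^\perp\}$ interacts with the fermionic signs implicit in the diagrammatic labelling, and I expect that pinning down this sign compatibility uniformly across both inductive steps will be the hardest single ingredient.
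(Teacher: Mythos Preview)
Your overall architecture coincides with the paper's: the same alternating induction $\tilde e_0,e_1,\tilde e_1,\dots$ via $\{\tilde e_{n-1},q^\perp\}=n e_n$ and $\tilde e_0 e_n-[Q,e_n]=\tilde e_n$, the same base case, and the same decomposition into a factorized hook-length piece and a determinantal piece. However, your description of how the determinantal piece arises is misdirected, and the actual mechanism is where the real work lies.

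You propose a Laplace expansion of ${\rm Det}_{\Omega/\Lambda}$ along the row of an extremal $x_i$, with the minors matching the intermediate determinants and the cofactor weights reproducing the $q^\perp$- or $Q$-eigenvalues. This is not what happens. Take the step $e_n\to\tilde e_n$ (the one the paper spells out). The intermediate superpartitions are $\Omega^{(j)}=\Omega\setminus\{y_j\}$ and $\Lambda^{(j)}=\Lambda\cup\{y_j\}$, indexed by the position of the \emph{new circle} contributed by $\tilde e_0$ or $\tilde Q$; so the sum is over the $y_j$'s, and the cofactor expansion is along the first row of the $(n{+}1)\times(n{+}1)$ determinant (the row of $1$'s), not along an $x_i$-row. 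Moreover, after you extract $\psi'_{\Omega/\Lambda}$ you do \emph{not} get clean eigenvalues as cofactor weights: there are residual ``extra contributions'' of type $h^A$ and $h^B$ from cells in the row and column of the added circle, and these assemble into a complicated expression
\[
C_j=\Bigl(1-\tfrac{y_j+1-\alpha}{\alpha}\Bigr)\prod_i\tfrac{x_i-y_j+\alpha-1}{x_i-y_j+\alpha}
+\tfrac{y_j+1-\alpha}{\alpha}\prod_i\tfrac{x_i-y_j-1}{x_i-y_j}
\]
sitting in the first row. The genuinely hard step, which your proposal does not anticipate, is a polynomial identity (Lemma~\ref{lemiden}) rewriting $C_j$ as $1-\sum_i [x_i;y_j]_\alpha f_i(x)$ with $f_i$ independent of $j$; only then does a row operation collapse the first row to $(1,\dots,1)$. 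Two further ingredients you omit: a preliminary lemma showing that non-strip $\Omega$'s cancel between the two halves of the (anti)commutator, and a limit argument (Remark~\ref{remark}) reducing the general case, where $x$ and $y$ labels share rows, to the generic case where all $[x_i;y_j]_\alpha$ are of the rational form. Finally, the sign bookkeeping you flag as the hardest part is in fact routine; the identity in Lemma~\ref{lemiden} is the crux.
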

\begin{proof}
  Since the proof is quite long and technical, most of the details will be relegated to the next section (Lemma~\ref{lem0} and Propositions~\ref{mainprop}
  and \ref{mainprop2}).  
  
  The proof proceeds by induction using three quantities belonging to the negative-half of the super Virasoro algebra studied in
\cite{DLM4}.  Consider $Q$ and $q^\perp$ defined as
\begin{equation}
Q = \sum_{i=1}^N \theta_i \left(\frac{N}{\alpha} +z_i \partial_{z_i}\right) \qquad {\rm and} \qquad 
q^\perp = \sum_{i=1}^N z_i \partial_{\theta_i}
\end{equation}  
with $N$ the number of variables (which will soon become irrelevant). The explicit 
action of  $Q$ and $q^\perp$, as well as of $\tilde e_0$, on Jack polynomials in superspace was obtained 
in \cite{DLM4}
\begin{equation}  \label{actione0}
\tilde{e}_{0}P_{\Lambda}^{(\alpha)}=\sum_{\Omega}(-1)^{\#(\Omega/\Lambda)} \left(\prod_{s \in {\rm col}_{\Omega/\Lambda}}\dfrac{h_{\Lambda}^{(\alpha)}(s)}{h_{\Omega}^{(\alpha)}(s)}\right)P_{\Omega}^{(\alpha)} 
\end{equation}
\begin{equation} \label{actionQ}
Q P_{\Lambda}^{(\alpha)}=\sum_{\Omega}(-1)^{\#(\Omega/\Lambda)} \left(\prod_{s \in {\rm col}_{\Omega/\Lambda}}\dfrac{h_{\Lambda}^{(\alpha)}(s)}{h_{\Omega}^{(\alpha)}(s)}\right)\dfrac{(N+1-i+\alpha(j-1))}{\alpha}P_{\Omega}^{(\alpha)} 
\end{equation}
and 
\begin{equation}
q^\perp P_{\Lambda}^{(\alpha)}=\sum_{\Omega}(-1)^{\#(\Omega/\Lambda)} \left(\prod_{s \in {\rm col}_{\Omega/\Lambda}}\dfrac{h^{\Omega}_{(\alpha)}(s)}{h^{\Lambda}_{(\alpha)}(s)}\right)P_{\Omega}^{(\alpha)} 
\end{equation} 
where in $\tilde e_0$ and $Q$ (resp. $q^\perp$) 
the sum is over all $\Omega$'s such that $\Omega/\Lambda$ consists of a new circle (resp. a bumping square).
In \eqref{actionQ}, $(i,j)$ corresponds to the position of the new circle.  In order to get rid of the dependency in $N$,
we use the operator $\tilde Q := Q-N\tilde e_0/\alpha$, whose action is easily seen to be
\begin{equation} \label{actionQt}
\tilde Q P_{\Lambda}^{(\alpha)}=\sum_{\Omega}(-1)^{\#(\Omega/\Lambda)} \left(\prod_{s \in {\rm col}_{\Omega/\Lambda}}\dfrac{h_{\Lambda}^{(\alpha)}(s)}{h_{\Omega}^{(\alpha)}(s)}\right)\dfrac{(1-i+\alpha(j-1))}{\alpha}P_{\Omega}^{(\alpha)} 
\end{equation}
It is straightforward to show that
\begin{equation} \label{commu1}
\{\tilde e_{n-1},q^\perp\} = n \, e_{n} \qquad \forall n \geq 1
\end{equation}
and
\begin{equation} \label{commu2}
\tilde e_0 e_{n} -[\tilde Q,e_{n}]= \tilde e_{n} \qquad \forall n \geq 1
\end{equation}
where $\{a,b\}= ab+ba$ and $[a,b]=ab-ba$.

After showing  that the action of $\tilde e_0$  
is consistent with the statement of the theorem (this is done in Lemma~\ref{lem0}),
we have the base case to start our inductive process.
Observe that knowing the action of $\tilde e_0$
and $q^\perp$ then allows by \eqref{commu1} to get the action of $e_1$. Then knowing the action of 
$e_1$, $\tilde e_0$ and $\tilde Q$ allows by \eqref{commu2} to get the action of $\tilde e_1$.  Repeating these
two steps again and again gives the Pieri rules for $e_n$ and $\tilde e_n$.
Our main (and most difficult) task is thus to show that
\begin{itemize}
\item If the Pieri rules hold for $\tilde e_{n-1}$, then they also hold for
  $e_n = \{\tilde e_{n-1},q^\perp\}/n$.   
\item If the Pieri rules hold for $e_{n}$, then they also hold for
$\tilde e_n=\tilde e_0 e_{n} -[\tilde Q,e_{n}]$.
\end{itemize}
These statements, which will be proven in Propositions~\ref{mainprop} and \ref{mainprop2}, imply by induction that the Pieri rules for the Jack polynomials in superspace hold.
\end{proof}

\begin{remark}  \label{remark} The general case can always be obtained as a limit of the special case where labels $x$ and $y$ are never in the
  same row.  This is easily seen as follows.  Supposing for instance that $\Omega/\Lambda$ is a vertical $\tilde n$-strip,
let
\begin{equation}
 {\rm Det_{\Omega/\Lambda}'}=
\left|
\begin{array}{cccc}
1 & 1 & \cdots & 1 \\
\displaystyle{[x_1; y_1]'_{\alpha}} & \displaystyle{[x_1; y_2]'_{\alpha}}  &\cdots & \displaystyle{[x_1; y_{n+1}]'_{\alpha}}  \\ 
\displaystyle{[x_2; y_1]'_{\alpha}}  & \displaystyle{[x_2; y_2]'_{\alpha}}  & \cdots & \displaystyle{[x_2; y_{n+1}]'_{\alpha}}  \\
\vdots& \vdots& \ddots& \vdots \\
\displaystyle{[x_n; y_1]'_{\alpha}}  & \displaystyle{[x_n; y_2]'_{\alpha}}  & \cdots & \displaystyle{[x_n; y_{n+1}]'_{\alpha}}  
\end{array}
\right|
\end{equation}
where
\begin{equation}
[x_i;y_j]'_\alpha = \frac{\alpha}{(x_i-y_j+\alpha)(x_i-y_j)} 
\end{equation}
We will now see that  
\begin{equation} \label{newBB}
  \lim_{\substack{x_{I_\ell}=y_{J_\ell}\\ x_{i_r}=y_{j_r}-\alpha}} \prod (y_{j_r} -x_{i_r}-\alpha) \prod (x_{I_\ell}-y_{J_\ell}) \,  {\rm Det_{\Omega/\Lambda}'}=
      {\rm Det_{\Omega/\Lambda}}
\end{equation}
where the pairs $(x_{i_r}, y_{j_r})$ are those such that $x_{i_r}= y_{j_r}-\alpha$ (that is, those such that $x_{i_r}$ labels
a cell just to the left of the cell labeled by $y_{j_r}$)
while the pairs $(x_{I_\ell}, y_{J_\ell})$
are those such that  $x_{I_\ell}= y_{J_\ell}$ (that is, those such that
$x_{I_\ell}$ and  $y_{J_\ell}$ label  cells in the same position).
In the first case, we have
$$
\lim_{x_i=y_\ell-\alpha} (y_{\ell}-x_i-\alpha) \, [x_i;y_\ell]'_{\alpha}= 1 \qquad
{\rm and} \qquad
\lim_{x_i=y_\ell-\alpha} (y_{\ell}-x_i-\alpha) \, [x_i;y_j]'_{\alpha}= 0 \quad {\rm for } \quad j\neq \ell
$$
which implies that
$$
\lim_{x_i=y_\ell-\alpha} (y_{\ell}-x_i-\alpha) \, [x_i;y_j]'_{\alpha} =  [x_i;y_j]_{\alpha} \quad {\rm for~all~} j
$$
Similarly, in the other case, we have
$$
\lim_{x_i=y_\ell} (x_i-y_\ell) \, [x_i;y_\ell]'_{\alpha}= 1
\qquad 
{\rm and}
\qquad
\lim_{x_i=y_\ell} (x_i -y_{\ell}) \, [x_i;y_j]'_{\alpha}= 0 \quad {\rm for } \quad j\neq \ell
$$
which implies again that
$$
\lim_{x_i=y_\ell} (x_i-y_{\ell}) \, [x_i;y_j]'_{\alpha} =  [x_i;y_j]_{\alpha} \quad {\rm for~all~} j
$$
We thus have proven that \eqref{newBB} holds since in the rows where $x_i$ is alone (that is, when there is 
no $y_j$ in its row), we have that $ [x_i;y_j]'_{\alpha}= [x_i;y_j]_{\alpha}$ for all $j$.

Using the vertical strip given in \eqref{eq3.8}, we have for instance
\begin{align}
&   \lim_{\substack{x_{1}=y_{2}\\ x_{2}=y_{3}-\alpha}}  (y_{3} -x_{2}-\alpha)  (x_{1}-y_{2}) \,  {\left|
\begin{array}{ccc}
\frac{\alpha}{(x_1-y_1+\alpha)(x_1-y_1)}  & \frac{\alpha}{(x_1-y_2+\alpha)(x_1-y_2)}  & \frac{\alpha}{(x_1-y_3+\alpha)(x_1-y_3)} \\  \\ 
\frac{\alpha}{(x_2-y_1+\alpha)(x_2-y_1)}  & \frac{\alpha}{(x_2-y_2+\alpha)(x_2-y_2)}  & \frac{\alpha}{(x_2-y_3+\alpha)(x_2-y_3)} \\  \\
\frac{\alpha}{(x_3-y_1+\alpha)(x_3-y_1)}  & \frac{\alpha}{(x_3-y_2+\alpha)(x_3-y_2)}  & \frac{\alpha}{(x_3-y_3+\alpha)(x_3-y_3)} 
\end{array}
\right|
  }  \nonumber \\
  & \nonumber \\
& \qquad \qquad \qquad=
      {\left|
\begin{array}{ccc}
0  & 1  & 0 \\ 
0 & 0 & 1  \\
\frac{\alpha}{(x_3-y_1+\alpha)(x_3-y_1)}  & \frac{\alpha}{(x_3-y_2+\alpha)(x_3-y_2)}  & \frac{\alpha}{(x_3-y_3+\alpha)(x_3-y_3)} 
\end{array}
\right| 
} = {\rm Det_{\Omega/\Lambda}}
\end{align}
\end{remark}

\section{Proofs} \label{secproofs}

\subsection{Elementary results}
The following straightforward observations will prove useful.
\begin{lemma} \label{lemma1}
  Suppose that $\Lambda \subseteq \Gamma \subseteq \Omega$.  We have that
\begin{equation}
h_{\Omega/\Lambda}^D(s)=  h_{\Omega/\Gamma}^D (s) \, h_{\Gamma/\Lambda}^D(s) 
\end{equation}
where $D$ stands for $A,B$ or $C$.
\end{lemma}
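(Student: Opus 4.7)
The statement is a straightforward telescoping identity, so the proof plan is essentially mechanical: just unfold the definitions of $h^A_{\Omega/\Lambda}(s)$, $h^B_{\Omega/\Lambda}(s)$, $h^C_{\Omega/\Lambda}(s)$ given in the paragraph preceding the lemma and observe that the intermediate factor associated to $\Gamma$ cancels.

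More concretely, I would handle the three cases $D = A, B, C$ in turn. For $D = A$, I would simply write
\begin{equation*}
h_{\Omega/\Gamma}^A(s)\,h_{\Gamma/\Lambda}^A(s) \;=\; \frac{h_\Gamma^{(\alpha)}(s)}{h_\Omega^{(\alpha)}(s)} \cdot \frac{h_\Lambda^{(\alpha)}(s)}{h_\Gamma^{(\alpha)}(s)} \;=\; \frac{h_\Lambda^{(\alpha)}(s)}{h_\Omega^{(\alpha)}(s)} \;=\; h_{\Omega/\Lambda}^A(s),
\end{equation*}
and similarly for $D = B$, the factor $h^\Gamma_{(\alpha)}(s)$ cancels between numerator and denominator. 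The case $D = C$ then follows either by the same telescoping argument applied to the combined ratio, or, more efficiently, by using the identity $C = AB$ (pointed out explicitly by the authors after the list of six cases) together with the two cases already established.

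There is really no obstacle here; the only thing one needs to be careful about is that $h_\Lambda^{(\alpha)}(s)$, $h^\Lambda_{(\alpha)}(s)$ and their analogues are well-defined for every cell $s$ of $\Lambda^{\circledast}$ (and by convention equal to $1$ on empty cells), so that all the ratios make sense. Because $\Lambda \subseteq \Gamma \subseteq \Omega$ guarantees that any cell of $\Lambda^\circledast$ also belongs to $\Gamma^\circledast$ and $\Omega^\circledast$, all three hook quantities at $s$ are simultaneously well-defined, and the cancellation is unproblematic. The whole argument should fit in a few lines.
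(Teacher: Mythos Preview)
Your proposal is correct and matches the paper's treatment: the authors present this lemma as a ``straightforward observation'' and give no proof at all, so the telescoping argument you outline is precisely what they have in mind.
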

\begin{lemma} \label{lemmah}
  Let $s$ lie in a column whose only non-preexisting cell is a new circle (resp. a bumping square).  Then
  \begin{equation}
    h_{\Omega/\Lambda}^B(s) =  1 \qquad  \Bigl({\rm resp.~ }  h_{\Omega/\Lambda}^A(s)  \Bigr)
  \end{equation}
and, consequently, 
   \begin{equation}
    h_{\Omega/\Lambda}^C(s) =  h_{\Omega/\Lambda}^A(s)   \qquad  \Bigl({\rm resp.~ }  h_{\Omega/\Lambda}^C(s)= h_{\Omega/\Lambda}^B(s)  \Bigr)
  \end{equation}
  \end{lemma}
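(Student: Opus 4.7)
The plan is to prove both cases by directly expanding the hook-length formulas $h^\Lambda_{(\alpha)}(s) = \ell_{\Lambda^*}(s) + 1 + \alpha\, a_{\Lambda^\circledast}(s)$ and $h_\Lambda^{(\alpha)}(s) = \ell_{\Lambda^\circledast}(s) + \alpha\,(a_{\Lambda^*}(s) + 1)$ together with their $\Omega$-analogues, and then comparing the four arm/leg quantities that enter the ratios $h^B_{\Omega/\Lambda}(s)$ and $h^A_{\Omega/\Lambda}(s)$. The ``consequently'' assertion is then immediate from the identity $h^C_{\Omega/\Lambda}(s) = h^A_{\Omega/\Lambda}(s)\,h^B_{\Omega/\Lambda}(s)$, which holds tautologically from the definitions of $A$, $B$, $C$.

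For Case~1, write $s=(i,b)$ and let the new circle sit at $(k,b)$ with $k>i$. Because a new circle lies in $\Omega^\circledast \setminus \Omega^*$ and is not a cell of $\Lambda^\circledast$, column~$b$ of $\Omega^*$ agrees with column~$b$ of $\Lambda^*$, so $\ell_{\Omega^*}(s) = \ell_{\Lambda^*}(s)$. The unique cell added to column~$b$ of the $\circledast$ diagram lies in row~$k \neq i$, so it does not extend row~$i$; provided the modification stays localized to that column (the setting in which the lemma will be used during the inductive step for Theorem~\ref{theo}), one also has $a_{\Omega^\circledast}(s) = a_{\Lambda^\circledast}(s)$. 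Combined, these equalities force $h^\Omega_{(\alpha)}(s) = h^\Lambda_{(\alpha)}(s)$, i.e.\ $h^B_{\Omega/\Lambda}(s) = 1$. Case~2 is the mirror image: a bumping square at $(k,b)$ is a square of $\Omega^*$ that was already a circle in $\Lambda^\circledast \setminus \Lambda^*$, so $(k,b)$ belongs to both $\Lambda^\circledast$ and $\Omega^\circledast$; thus column~$b$ of $\Omega^\circledast$ matches that of $\Lambda^\circledast$ and $\ell_{\Omega^\circledast}(s) = \ell_{\Lambda^\circledast}(s)$, while the symmetric locality argument yields $a_{\Omega^*}(s) = a_{\Lambda^*}(s)$, hence $h^A_{\Omega/\Lambda}(s) = 1$.

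The main subtle point I would watch carefully is justifying that the row of $s$ really remains untouched away from column~$b$. The column hypothesis by itself only pins down what happens in column~$b$, and the vertical-strip condition in principle still permits an extra cell to be added in some other column of row~$i$, which would shift the $\circledast$-arm in Case~1 (respectively the $*$-arm in Case~2) by one and spoil the conclusion. In the contexts where Lemma~\ref{lemmah} is actually invoked --- namely within the explicit actions \eqref{actione0} and \eqref{actionQt} of $\tilde e_0$, $q^\perp$, and $\tilde Q$ on a Jack polynomial in superspace, each of which alters only a single row --- this is automatic; more generally, one can decompose a compound transformation into atomic steps along an intermediate superpartition $\Gamma$ using Lemma~\ref{lemma1} and apply the atomic version factor by factor.
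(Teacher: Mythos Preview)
The paper states this lemma without proof, listing it among two ``straightforward observations.'' Your direct computation from the hook-length formulas is exactly the intended argument: the column hypothesis immediately gives $\ell_{\Omega^*}(s)=\ell_{\Lambda^*}(s)$ in Case~1 (a new circle lies in $\Omega^\circledast\setminus\Omega^*$, so the $*$-column is unchanged) and $\ell_{\Omega^\circledast}(s)=\ell_{\Lambda^\circledast}(s)$ in Case~2 (a bumping square was already a cell of $\Lambda^\circledast$), while the ``consequently'' clause is just the identity $C=AB$.

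You are also right that the arm equality is the delicate point, and in fact the lemma as literally written is not a theorem in full generality: with $\Lambda^*=\Lambda^\circledast=(1)$, a new square at $(1,2)$, a new circle at $(2,1)$, and $s=(1,1)$, the column of $s$ meets the hypothesis yet $h^B_{\Omega/\Lambda}(s)=1+\alpha$. What makes the statement ``straightforward'' in the paper's sense is that every invocation in Section~\ref{secproofs} occurs in a context where the row of $s$ contributes no cell to $\Omega^\circledast/\Lambda^\circledast$ (Case~1) or to $\Omega^*/\Lambda^*$ (Case~2). This is automatic for the single-cell skew shapes $\Omega/\Omega^-$ and $\Lambda^+/\Lambda$ you cite; in the multi-cell applications (for instance the line ``$h^B_{\Omega^-/\Lambda}(s)=1$ if there is only a new circle in the column of $s$'' inside {\bf (C3)}) it holds because the row of $s$ there can contain at most a bumping square, which does not alter the $\circledast$ diagram. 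Your reading --- that the arm condition is supplied by the surrounding context, or recovered factor by factor through Lemma~\ref{lemma1} --- matches precisely how the lemma functions throughout the paper.
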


\subsection{Preliminary steps.}

In order to start our induction process, we first need to show that \eqref{actione0} coincides with \eqref{eqPieriex} in the case of $\tilde e_0$.
\begin{lemma} \label{lem0}
  The action of $\tilde e_0$ given in \eqref{actione0} coincides with 
the Pieri rule \eqref{eqPieriex} in the case of $\tilde e_0$.
\end{lemma}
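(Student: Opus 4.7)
The plan is to verify that, in the special case at hand, the three ingredients of $\tilde v_{\Lambda\Omega}(\alpha)$ appearing in \eqref{eqPieriex}---the sign, the factorized part $\psi_{\Omega/\Lambda}'$, and the determinant ${\rm Det_{\Omega/\Lambda}}$---collapse to exactly the expression in \eqref{actione0}.

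First, I would identify the relevant vertical strips. A vertical $\tilde 0$-strip requires $\Omega^*/\Lambda^*$ to be a vertical $0$-strip (hence empty) and $\Omega^\circledast/\Lambda^\circledast$ to be a vertical $1$-strip, so $\Omega/\Lambda$ consists of exactly one new circle, with no bumping squares and no new squares. In particular, the sums in \eqref{eqPieriTheo} and \eqref{actione0} range over the same set of $\Omega$.

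Second, I would evaluate ${\rm Det_{\Omega/\Lambda}}$ in the $\tilde v$ formula. There are no $x$-labels and a single $y$-label $y_1$, so the relevant matrix is the $1\times 1$ matrix whose only entry is the $1$ from the top row; hence ${\rm Det_{\Omega/\Lambda}}=1$. Next, I would analyze $\psi_{\Omega/\Lambda}'$. For each $s\in{\rm col}_{\Omega/\Lambda}$, the row of $s$ lies strictly above the unique new circle and contains no non-preexisting cell, so it terminates either in a preexisting circle or in an ordinary square; in either case (the first and second diagrams of \eqref{sixcases}) the contribution is $h_{\Omega/\Lambda}^C(s)$. Applying Lemma~\ref{lemmah} with the column's sole non-preexisting cell being a new circle yields $h_{\Omega/\Lambda}^B(s)=1$ and hence $h_{\Omega/\Lambda}^C(s)=h_{\Omega/\Lambda}^A(s)=h_\Lambda^{(\alpha)}(s)/h_\Omega^{(\alpha)}(s)$, so that
\begin{equation*}
\psi_{\Omega/\Lambda}'=\prod_{s\in{\rm col}_{\Omega/\Lambda}}\frac{h_\Lambda^{(\alpha)}(s)}{h_\Omega^{(\alpha)}(s)}.
\end{equation*}

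Finally, the sign $(-1)^{\#(\Omega/\Lambda)}$ appearing in both formulas is unambiguous: with no bumping squares and no new squares present, $\#(\Omega/\Lambda)$ reduces in both cases to the number of preexisting circles lying above the single new circle. Combining these observations, the coefficient of $P_\Omega^{(\alpha)}$ in the Pieri expansion \eqref{eqPieriex} for $\tilde e_0 P_\Lambda^{(\alpha)}$ agrees term-by-term with the coefficient in \eqref{actione0}. There is no real obstacle; this lemma simply records that the $n=0$ case of the conjectured Pieri rule is tautologically the already-established formula for $\tilde e_0$, thereby furnishing the base case for the inductive proof of Theorem~\ref{theo}.
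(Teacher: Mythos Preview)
Your proof is correct and follows essentially the same approach as the paper: identify that a vertical $\tilde 0$-strip consists of a single new circle, observe that ${\rm Det}_{\Omega/\Lambda}=|1|=1$, and reduce each $C$-contribution to an $A$-contribution via Lemma~\ref{lemmah}. You add a couple of explicit remarks (on why the row of $s$ can only end in a preexisting square or preexisting circle, and on why the sign exponent $\#(\Omega/\Lambda)$ matches) that the paper leaves implicit, but the argument is the same.
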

\begin{proof}
In the case $\tilde e_0$ of \eqref{eqPieriex},
there is exactly one new circle and no bumping squares or new squares.  The cells $s$ above the new 
circle (in the same column) thus always make a contribution of type $C$, that is,
$$
\psi'_{\Omega/\Lambda} =\prod_{s \in {\rm col}_{\Omega/\Lambda}} h_{\Omega/\Lambda}^C(s)
$$
But $h_{\Omega/\Lambda}^C(s)=  h_{\Omega/\Lambda}^A(s)$ by Lemma~\ref{lemmah}, which gives
$$
\psi'_{\Omega/\Lambda}  =\prod_{s \in {\rm col}_{\Omega/\Lambda}} h_{\Omega/\Lambda}^A(s) = \prod_{s \in {\rm col}_{\Omega/\Lambda}} \frac{h_\Lambda^{(\alpha)}(s)
}{h_\Omega^{(\alpha)}(s)}
$$
as wanted.  Formulas  \eqref{actione0} and \eqref{eqPieriex} then coincide since ${\rm Det_{\Omega/\Lambda}}=1$ in the $\tilde e_0$ case
(without bumping squares or new squares, there are no $x$-labels 
 which leads to the one by one determinant
${\rm Det_{\Omega/\Lambda}}=|1|=1)$.  
\end{proof}

As described in the proof of Theorem~\ref{theo}, we have two results to prove:
(i) supposing that the Pieri rules hold for $\tilde e_{n-1}$ we need to show they also hold
for $e_n$ and (ii)  supposing that the Pieri rules hold for $e_{n}$ we need to show they also hold
for $\tilde e_n$.  We will only do the second case in details since the other case can be shown to
hold in a very similar way.

We first  rewrite \eqref{commu2}
as 
\begin{equation} \label{commu1v2}
\tilde e_{n}= (\tilde e_0-\tilde Q)e_n+e_n \tilde Q
\end{equation}
Using the notation of the Pieri rules \eqref{pierifirst}, 
$\Lambda^+$ will always stand in what follows for $\Lambda$ plus the circle coming from the action of $\tilde Q$ in $e_n\tilde Q$ while $\Omega^-$ will stand for $\Omega$ minus the circle coming from the action 
of $\tilde e_0-\tilde Q$ in $(\tilde e_0-\tilde Q)e_n$. 

We first show that only vertical $\tilde n$-strips can occur.
\begin{lemma} \label{lemmavert}
If the Pieri rules stated in Theorem~\ref{theo} hold for $e_{n}$, then \eqref{commu1v2} implies that in the action of
$\tilde e_n$ on a Jack polynomial in superspace $P^{(\alpha)}_\Lambda$, the only $P_\Omega^{(\alpha)}$'s
that can occur are such that $\Omega/\Lambda$ is a vertical  $\tilde{n}$-strip.
\end{lemma}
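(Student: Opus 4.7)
The plan is to expand $\tilde e_n P_\Lambda^{(\alpha)}$ via \eqref{commu1v2} using the induction hypothesis (Pieri for $e_n$) together with the explicit actions \eqref{actione0} and \eqref{actionQt}, extract the coefficient of $P_\Omega^{(\alpha)}$ in the resulting expression, and show that this coefficient vanishes whenever $\Omega/\Lambda$ is not a vertical $\tilde n$-strip. First I would observe that in both terms of \eqref{commu1v2} the $*$-diagram of the output is controlled entirely by the $e_n$ factor: the operators $\tilde e_0$ and $\tilde Q$ only adjoin a single new circle and leave the $*$-diagram intact, while the induction hypothesis forces each $e_n$-step to change $*$ by a vertical $n$-strip. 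Hence every $\Omega$ appearing in either term automatically satisfies that $\Omega^*/\Lambda^*$ is a vertical $n$-strip, and the only obstruction to $\Omega/\Lambda$ being a vertical $\tilde n$-strip is that $\Omega^\circledast/\Lambda^\circledast$ contains two cells in some common row; call such an $\Omega$ bad.

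Next, for each bad $\Omega$ I would analyze how it can be produced. From $e_n\tilde Q$, one adjoins a circle at some cell $(i,j)$ of $\Lambda$ first, producing $\Lambda^+$, and then applies $e_n$; the conflict occurs when the subsequent vertical $n$-strip contains a cell in row $i$. From $(\tilde e_0-\tilde Q)e_n$, one first builds a vertical $n$-strip $\Gamma/\Lambda$ and then adjoins a circle to $\Gamma$ in the same row as a strip-cell. There is a natural pairing of these two types of intermediate configuration that yields the same bad $\Omega$, obtained by swapping the roles of the two cells in the conflicting row. Using Lemmas~\ref{lemma1} and \ref{lemmah}, the hook-quotient prefactors in the two matched contributions can be put in a common form, so that the claim reduces to showing that the positional weight $(1-i+\alpha(j-1))/\alpha$ contributed by $\tilde Q$ in $e_n\tilde Q$, times the induction-provided coefficient $v_{\Lambda^+\Omega}(\alpha)$, cancels exactly the weight from the paired intermediate on the other side.

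The main obstacle is this final algebraic cancellation. The difficulty is that both $v_{\Lambda^+\Omega}(\alpha)$ and $v_{\Lambda\Gamma}(\alpha)$ carry a determinantal factor $\mathrm{Det}_{\Omega/\Lambda}$ whose entries depend on the positions of bumping squares, new squares, and new circles, and the pairing shuffles which cells carry the $x$- and $y$-labels of \eqref{defab}. The plan is to exploit multilinearity of the determinant in the rows corresponding to the conflicting cells, reducing the identity to an elementary rational-function cancellation in $(i,j)$. Should the direct combinatorics become intractable, one may instead invoke the general support result of \cite{DLM3} that $\tilde e_n P_\Lambda^{(\alpha)}$ is supported on vertical $\tilde n$-strips, from which the vanishing of the bad coefficients produced by \eqref{commu1v2} follows immediately.
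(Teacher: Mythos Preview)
Your overall strategy matches the paper's: pair the contribution of $(\tilde e_0-\tilde Q)e_n$ through an intermediate $\Omega^-$ with the contribution of $e_n\tilde Q$ through an intermediate $\Lambda^+$, and show they cancel on any bad $\Omega$. The paper makes this concrete by observing that a bad $\Omega$ has exactly one offending row, and that in this row the two cells arise as $\footnotesize\tableau[scY]{ \fl & \bl \cerclep}$ (new square from $e_n$, then circle from $\tilde e_0-\tilde Q$) in Type~I and as $\footnotesize\tableau[scY]{ \tf \bullet & \bl \cerclep}$ (circle from $\tilde Q$, then bumped and recircled by $e_n$) in Type~II. The factorized pieces are then matched cell by cell via a four-case analysis (your use of Lemmas~\ref{lemma1} and~\ref{lemmah} is exactly what the paper does here), and the positional weights of $\tilde e_0-\tilde Q$ at $(i,j)$ and $\tilde Q$ at $(i,j-1)$ produce the relative sign $(-1)$.

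Where your proposal has a genuine gap is the treatment of the determinantal factors. You describe the difficulty correctly but the suggested tool (``multilinearity of the determinant in the rows corresponding to the conflicting cells'') is not what resolves it. The actual mechanism is built into the definition \eqref{defxy} of $[x_i;y_j]_\alpha$: in both Type~I and Type~II the offending row carries \emph{both} an $x$-label (say $x_i$) and a $y$-label (say $y_j$), so that row of the determinant matrix is $(0,\dots,0,1,0,\dots,0)$ with the $1$ in column $j$. Hence each of $\mathrm{Det}_{\Omega^-/\Lambda}$ and $\mathrm{Det}_{\Omega/\Lambda^+}$ equals, up to the same sign, its $(i,j)$-minor, which is independent of the value of $y_j$. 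Since the \emph{only} difference between the two determinants is the value of $y_j$ (the new-circle position shifts by one column between Types~I and~II), the two determinants coincide on the nose. No row-reduction or rational-function identity is needed here. You should also not omit the sign check: the paper verifies separately that $(-1)^{\#(\Omega^-/\Lambda)+\#(\Omega/\Omega^-)}=(-1)^{\#(\Lambda^+/\Lambda)+\#(\Omega/\Lambda^+)}$ by a parity count of new circles and bumping squares, and this is essential to the cancellation.

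Your fallback---invoking the support result of \cite{DLM3} for $\tilde e_n$---is logically valid, since \eqref{commu1v2} is an operator identity and the Jack superpolynomials are linearly independent, so any cancellation on the left forces the same cancellation on the right. But it sidesteps the point of the lemma, which is to establish the support claim \emph{within} the inductive framework driven by \eqref{commu1v2}, so that the same machinery can then be reused in Proposition~\ref{mainprop} to extract the actual coefficients.
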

\begin{proof}
Since, by hypothesis, $e_n$ only gives rise
to vertical $n$-strips, the only possible non-vertical $\tilde n$-strips are 
such that in exactly one row (dubbed the non-vertical row), the new cells are
of the following two types:
\begin{itemize}
\item[(I)]  $\footnotesize\tableau[scY]{    \fl &  \bl \cerclep     } $ from the action of $(\tilde e_0-\tilde Q)e_n$, where the circle comes from the action of $\tilde e_0-\tilde Q$.
\item[(II)]  $\footnotesize\tableau[scY]{    \tf \bullet &  \bl \cerclep     } $ from the action of $e_n \tilde Q$, where $\tilde Q$ first adds a circle and then 
$e_n$ adds a bumping square and a new circle.
\end{itemize}
 We will show that 
\begin{equation}\label{firstclaim}
\psi'_{\Omega^-/\Lambda} \times \left(\text{ contribution of } \tilde e_0-\tilde Q \right) =  (-1) \times   \left(\text{ contribution of } \tilde Q \right) \times  \psi'_{\Omega/\Lambda^+}
\end{equation}
Then showing that
\begin{equation} \label{secondclaim}
(-1)^{\#(\Omega^-/\Lambda)+\#(\Omega/\Omega^-)} 
{\rm Det}_{\Omega^-/\Lambda} = (-1)^{\#(\Lambda^+/\Lambda)+\#(\Omega/\Lambda^+)} 
{\rm Det}_{\Omega/\Lambda^+} 
\end{equation}
will prove our claim
since the sum of the two contributions will be zero.

We start with \eqref{firstclaim}. We first examine the contribution stemming from the cell
of the new circle in $\tilde Q$ (this contribution is 1 in $\tilde e_0$).  
Supposing that in Type (I) it is position $(i,j)$, we have a contribution from $\tilde e_0-\tilde Q$
of 
$$
1-\frac{(1-i)+\alpha(j-1)}{\alpha}= -\frac{(1-i)+\alpha(j-2)}{\alpha}
$$
in Type (I), while in Type (II) the new circle in $\tilde Q$ is in position $(i,j-1)$ giving a contribution of
$$
\frac{(1-i)+\alpha(j-1-1)}{\alpha}= \frac{(1-i)+\alpha(j-2)}{\alpha}
$$
Comparing the last two equations explains the factor $(-1)$ in the RHS of \eqref{firstclaim}.

 We will now show that for any remaining cell $s$ that contributes, we have
\begin{equation}\label{firstclaim2}
c_{\Omega^-/\Lambda}(s) \times \left(\text{ contribution of } \tilde e_0-\tilde Q \right) =     \left(\text{ contribution of } \tilde Q \right) \times  c_{\Omega/\Lambda^+}(s)
\end{equation}
which will finish the proof of \eqref{firstclaim}.

The possible contributions of the 
cells in the non-vertical row are the same in $\psi'_{\Omega^-/\Lambda}$ and  $\psi'_{\Omega/\Lambda^+}$
since they only contribute a value $1$.  Hence \eqref{firstclaim2} holds for those cells since
$\tilde e_0-\tilde Q$ and $\tilde Q$ do not contribute (which amounts to say that they contribute a factor 1). 

We finally consider the contributions of the
cells in the two columns above $\footnotesize\tableau[scY]{    \fl &  \bl \cerclep     }$ and
 $\footnotesize\tableau[scY]{    \tf \bullet &  \bl \cerclep     } $.  We always consider the contributions 
of the two cells in a given row, which we will call the left and right cells.  There are many cases to consider:
\begin{enumerate}
\item The cells belong to a row with only preexisting cells
\item The cells belong to a row with a new square or with both a bumping square and a new circle
\item The cells  belong to a row with a bumping square
\item The cells  belong to a row with a new circle
\end{enumerate}
We will suppose without loss of generality that there are no cells below the square in  $\footnotesize\tableau[scY]{    \fl &  \bl \cerclep     } $ or $\footnotesize\tableau[scY]{    \tf \bullet &  \bl \cerclep     } $ (using Lemma~\ref{lemma1}, with
$\Gamma$ equal to $\Omega$ without its cells below
the square, ensures that the remaining contribution $h^D_{\Omega/\Gamma}(s)$ is the same in both cases).  Here is the analysis of why \eqref{firstclaim2} holds in each case.  Note that we will use Lemma~\ref{lemmah} again and again without stating it.

Case (1): In Type (I), the left cell contributes $C=AB$ while the right cell contributes $A'$.  In Type (II),
the left cell contributes $A$ from the action of $\tilde Q$ and $B$ from the action of $e_n$ 
while
the right cell contributes $A'B'=A'$.
$$
{ \footnotesize\tableau[scY]{ \bl \mbox{$C$}  & \bl \mbox{  $A'$}  &\bl&\bl $\ldots \, $  & & \bl \tcercle{}  \\ \bl \vspace{-0ex}\vdots &\bl \vspace{-0ex}\vdots  \\ \bl& \bl \\  & \\     \fl &  \bl \cerclep   } }\qquad  \qquad \qquad
{ \footnotesize\tableau[scY]{\bl \mbox{$AB$}  &  \bl \mbox{ $\, A'$}  &\bl&\bl $\ldots \, $  & & \bl \tcercle{}  \\ \bl \vspace{-0ex}\vdots &\bl \vspace{-0ex}\vdots  \\ \bl& \bl \\  & \\    \tf \bullet &  \bl \cerclep   }
}
$$

Case (2): In Type (I), the left cell contributes 1 while the right cell contributes $A'$.  In Type (II),
the left cell contributes $A$ from the action of $\tilde Q$ and 1 from the action of $e_n$ while
the right cell contributes 1.  But $A=A'$ since the new square counts in $A'$ but not in $A$. 
$$
{ \footnotesize\tableau[scY]{ \bl \mbox{$1$}  & \bl \mbox{  $A'$}  &\bl&\bl $\ldots \, $  & & \fl  \\ \bl \vspace{-0ex}\vdots &\bl \vspace{-0ex}\vdots  \\ \bl& \bl \\  & \\     \fl &  \bl \cerclep   } }\qquad  \qquad \qquad
{ \footnotesize\tableau[scY]{\bl \mbox{$A$}  &  \bl \mbox{ $1$}  &\bl&\bl $\ldots \, $  & & \fl  \\ \bl \vspace{-0ex}\vdots &\bl \vspace{-0ex}\vdots  \\ \bl& \bl \\  & \\    \tf \bullet &  \bl \cerclep   }
}
$$

Case (3): In Type (I), the left cell contributes $B$ while the right cell contributes $A'$.  
 In Type (II),
the left cell contributes $A$ from the action of $\tilde Q$ and $B$ from the action of $e_n$ while
the right cell contributes 1.  Again $A=A'$ since the bumping square counts in $A'$ but not in $A$ (circles do not contribute to the arm in $A$). 
$$
{ \footnotesize\tableau[scY]{ \bl \mbox{$B$}  & \bl \mbox{  $A'$}  &\bl&\bl $\ldots \, $  & & \tf \ocircle  \\ \bl \vspace{-0ex}\vdots &\bl \vspace{-0ex}\vdots  \\ \bl& \bl \\  & \\     \fl &  \bl \cerclep   } }\qquad  \qquad \qquad
{ \footnotesize\tableau[scY]{\bl \mbox{$AB$}  &  \bl \mbox{ $\, 1$}  &\bl&\bl $\ldots \, $  & & \tf \ocircle  \\ \bl \vspace{-0ex}\vdots &\bl \vspace{-0ex}\vdots  \\ \bl& \bl \\  & \\    \tf \bullet &  \bl \cerclep   }
}
$$

Case (4):   In Type (I), the left cell contributes $A$ while the right cell contributes $A'$.  
 In Type (II),
the left cell contributes $A$ from the action of $\tilde Q$ and $1$ from the action of $e_n$ 
 while
the right cell contributes $A'$ (the new circle does not affect the arm in $A$ and $A'$).
$$
{ \footnotesize\tableau[scY]{ \bl \mbox{$A$}  & \bl \mbox{  $A'$}  &\bl&\bl $\ldots \, $  & & \bl \cerclep  \\ \bl \vspace{-0ex}\vdots &\bl \vspace{-0ex}\vdots  \\ \bl& \bl \\  & \\     \fl &  \bl \cerclep   } }\qquad  \qquad \qquad
{ \footnotesize\tableau[scY]{\bl \mbox{$A$}  &  \bl \mbox{ $A'$}  &\bl&\bl $\ldots \, $  & & \bl \cerclep  \\ \bl \vspace{-0ex}\vdots &\bl \vspace{-0ex}\vdots  \\ \bl& \bl \\  & \\    \tf \bullet &  \bl \cerclep   }
}
$$

We now prove \eqref{secondclaim}.  Suppose that the label of the $x$ and $y$ variables in 
the non-vertical row of 
${\rm Det}_{\Omega^-/\Lambda}$ and ${\rm Det}_{\Omega/\Lambda^+}$ is $(x_i,y_j)$ (the labels are the same in both determinant
since the non-vertical row contains an $x$ and a $y$ label).  The only difference between
${\rm Det}_{\Omega^-/\Lambda}$ and ${\rm Det}_{\Omega/\Lambda^+}$ is that $y_j$ differs in the two determinants.
But by construction, ${\rm Det}_{\Omega^-/\Lambda}$ and ${\rm Det}_{\Omega/\Lambda^+}$ do not depend on $y_j$.
This is seen as follows (only doing the case ${\rm Det}_{\Omega^-/\Lambda}$, the other case being similar): the quantity ${\rm Det}_{\Omega^-/\Lambda}$ is the determinant of a matrix
 whose $i$-th row is made of zeros except in column $j$ where the entry is 1. 
 Hence ${\rm Det}_{\Omega^-/\Lambda}$ is equal, up to a sign, to its $(i,j)$-th minor which does not depend on 
$y_j$ (the only dependency in $y_j$ in the matrix is in column $j$).  Doing the same analysis for
${\rm Det}_{\Omega/\Lambda^+}$ implies that
${\rm Det}_{\Omega^-/\Lambda}$ and ${\rm Det}_{\Omega/\Lambda^+}$ are equal.

We finally have to show that the signs 
coincide.  For $(-1)^{\#(\Omega^-/\Lambda)+\#(\Omega/\Omega^-)}$ and $(-1)^{\#(\Lambda^+/\Lambda)+\#(\Omega/\Lambda^+)}$ we
only consider the contribution of (or what is affected by) the cells in the non-vertical row since the rest of the contributions
are equal on both sides.  The contribution of the non-vertical row to $(-1)^{\#(\Omega^-/\Lambda)+\#(\Omega/\Omega^-)}$
is
$$
(-1)^{\# (\text{preexisting circles above)}+\#(\text{new circles above})+\# (\text{bumping squares below})+\# (\text{new circles below})}
$$
given that the new square 
in that row contributes
$$
(-1)^{\# (\text{bumping squares below})+\# (\text{new circles below})}
$$
while the new circle contributes
$$
(-1)^{\# (\text{preexisting circles above)}+\#(\text{new circles above})}
$$

In the other case, the contribution of the non-vertical row to $(-1)^{\#(\Lambda^+/\Lambda)+\#(\Omega/\Lambda^+)}$ is
$$
(-1)^{\# (\text{preexisting circles above)}+\# (\text{bumping squares above}) }
$$
since the bumping squares above the new circle stemming from the action of $\tilde Q$ were preexisting circles before $e_n$ acted.

If we multiply
the two contributions, we get
$$
(-1)^{\# (\text{new circles below}) +\# (\text{new circles above}) + \# (\text{bumping squares below})+\# (\text{bumping squares above})}=1
$$
because the total number of new circles is equal to the total number of bumping squares in 
$e_n$.
\end{proof}

\subsection{An identity}  Before embarking on the main proof that knowing that $e_n$ obeys the Pieri rules
implies that $\tilde e_n$ also obeys the Pieri rules, we prove an identity that will prove essential.
\begin{lemma} \label{lemiden} We have
  \begin{equation} \label{eqiden}
    \begin{split}
    & \left(1-  \frac{y+1-\alpha}{\alpha}\right) \left( \prod_{i=1}^n \frac{x_i-y+\alpha-1}{x_i-y+\alpha} \right)  
  + 
    \left(\frac{y+1-\alpha}{\alpha}\right) \left( \prod_{i=1}^n \frac{x_i-y-1}{x_i-y} \right)\\
  &  \qquad \qquad \qquad = 1 - \sum_{i=1}^n \frac{\alpha}{(x_i-y+\alpha)(x_i-y)}\left( \frac{x_i+1-\alpha}{\alpha} \right) \left( \prod_{k \neq i} \frac{x_k-x_i-1}{x_k-x_i}  \right)
 \end{split}
  \end{equation}
\end{lemma}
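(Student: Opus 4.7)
The plan is to view both sides of \eqref{eqiden} as rational functions in the single variable $y$, with $x_1,\dots,x_n$ and $\alpha$ treated as generic parameters, and to establish equality by matching poles, residues, and the value at infinity. Inspecting denominators, one sees that both sides admit only simple poles at $y=x_j$ and at $y=x_j+\alpha$ for $j=1,\dots,n$: on the LHS, the second term produces the poles at $x_j$ and the first term produces those at $x_j+\alpha$; on the RHS, both types come from the $i=j$ summand.

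Next, I would compute the residues at these points. At $y=x_j$ only the second term on the LHS contributes; writing $\frac{x_j-y-1}{x_j-y}=1-\frac{1}{x_j-y}$ and evaluating the remaining factors at $y=x_j$ gives the residue
\[
\frac{x_j+1-\alpha}{\alpha}\,\prod_{k\neq j}\frac{x_k-x_j-1}{x_k-x_j}.
\]
On the RHS, only the $i=j$ summand has a pole there, and after setting $x_j-y+\alpha=\alpha$ the same value emerges. An entirely analogous computation at $y=x_j+\alpha$ isolates the factor $\frac{x_j-y+\alpha-1}{x_j-y+\alpha}$ in the LHS and the $i=j$ summand on the RHS, producing on both sides the residue
\[
\frac{\alpha-x_j-1}{\alpha}\,\prod_{k\neq j}\frac{x_k-x_j-1}{x_k-x_j}.
\]

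To finish, I would analyze the large-$y$ behavior. The RHS clearly tends to $1$ since every summand in the sum is $O(1/y^2)$. On the LHS each product expands as
\[
\prod_i\frac{x_i-y+\alpha-1}{x_i-y+\alpha}=1+\tfrac{n}{y}+O(1/y^2),\qquad \prod_i\frac{x_i-y-1}{x_i-y}=1+\tfrac{n}{y}+O(1/y^2),
\]
and combining these with the linear prefactors $1-\tfrac{y+1-\alpha}{\alpha}$ and $\tfrac{y+1-\alpha}{\alpha}$ one sees that the $y$-, constant-, and $1/y$-coefficients all cancel appropriately, leaving $\mathrm{LHS}=1+O(1/y^2)$. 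Consequently $\mathrm{LHS}-\mathrm{RHS}$ is a rational function in $y$ with no poles and vanishing at infinity, hence identically zero.

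The main obstacle is the expansion at infinity: each of the two LHS summands individually diverges linearly in $y$, so one cannot just read off a leading term but must verify a second-order cancellation. The residue computations themselves are routine, though they demand careful sign bookkeeping between $\tfrac{1}{x_i-y}$ and $\tfrac{1}{y-x_i}$ when extracting residues from expressions of the form $1-\tfrac{1}{x_i-y}$.
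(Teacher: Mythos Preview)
Your argument is correct and is essentially the paper's proof recast in the language of residues: the paper multiplies both sides by $\prod_k(x_k-y+\alpha)(x_k-y)$, verifies that the resulting expressions $F(y)$ and $G(y)$ are both monic polynomials of degree $2n$ in $y$, and then checks $F=G$ at the $2n$ points $y=x_\ell$ and $y=x_\ell+\alpha$; these evaluations are precisely your residue computations, and the ``monic of degree $2n$'' step is your asymptotic analysis at infinity. (Incidentally, you work harder than needed at infinity: once $\mathrm{LHS}-\mathrm{RHS}$ has no finite poles it is a polynomial, so merely $\mathrm{LHS}\to 1$ already forces it to vanish --- the $O(1/y^2)$ refinement is true but unnecessary.)
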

\begin{proof}
  Let the l.h.s. and the r.h.s. of \eqref{eqiden} be respectively $A$ and $B$.
  Now consider
  \begin{equation}
    F(y) = A \times \prod_{k=1}^n (x_k-y+\alpha)(x_k-y) \quad \text{and} \quad  G(y) = B  \times \prod_{k=1}^n (x_k-y+\alpha)(x_k-y)
  \end{equation}
  Showing that $F(y)=G(y)$ thus amounts to proving \eqref{eqiden}.
Observe that $\prod_{k=1}^n (x_k-y+\alpha)(x_k-y)$ is a monic polynomial in $y$ of degree $2n$ over the field $\mathbb Q(x_1,\dots,x_n,\alpha)$.
It is therefore immediate that $G(y)$ is a monic polynomial in $y$ of degree $2n$.

We will now show that
$F(y)$ is also a monic polynomial in $y$ of degree $2n$.  Since
\begin{equation}
\left( \prod_{k=1}^n (x_k-y+\alpha)(x_k-y) \right) \left(  \prod_{i=1}^n \frac{x_i-y+\alpha-1}{x_i-y+\alpha}  \right) =  \prod_{k=1}^n (x_k-y+\alpha-1)(x_k-y)
\end{equation}  
is a monic polynomial in $y$ of degree $2n$, it suffices to show that
  \begin{equation} \label{eqbrack}
 \frac{y+1-\alpha}{\alpha} \left[  \prod_{i=1}^n (x_i-y+\alpha-1)(x_i-y) -   \prod_{i=1}^n (x_i-y-1)(x_i-y+\alpha)  \right]
  \end{equation}  
  is a polynomial in $y$ of degree at most $2n-1$.  But this is immediate given that
  \begin{equation}
(x-y+\alpha-1)(x-y)  =(x-y-1)(x-y+\alpha) +\alpha 
 \end{equation}   
  implies that the term between brackets in \eqref{eqbrack} is of degree at most $2n-2$ in $y$.

  Given that $F(y)$ and $G(y)$ are both monic of degree $2n$, in order to conclude that they are equal
  we only need to verify that they
  coincide
  at the $2n$ points $y=x_\ell+\alpha$ and $y=x_\ell$ for $\ell=1,\dots,n$.  It is straightforward to see that
  \begin{equation}
    \begin{split}
         F(x_\ell+\alpha)= &\left(1- \frac{x_\ell+1}{\alpha} \right) \Biggl[\prod_{i=1}^n (x_i -x_\ell-1)(x_i-x_\ell-\alpha)  \Biggr] \\
       & \qquad \qquad   =  \alpha \left(\frac{\alpha-x_\ell-1}{\alpha} \right) \Biggl[\prod_{k\neq \ell} (x_k -x_\ell-1)(x_k-x_\ell-\alpha)  \Biggr]= G(x_\ell+\alpha)
    \end{split}
  \end{equation}  
  Similarly, it can be checked that
  \begin{equation}
    \begin{split}
         F(x_\ell)= &\frac{x_\ell+1-\alpha}{\alpha}  \Biggl[\prod_{i=1}^n (x_i -x_\ell+\alpha)(x_i-x_\ell-1)  \Biggr] \\
       & \qquad \qquad   =  -\alpha \left(\frac{x_\ell+1-\alpha}{\alpha} \right) \Biggl[\prod_{k\neq \ell} (x_k -x_\ell+\alpha)(x_k-x_\ell-1)  \Biggr]= G(x_\ell)
    \end{split}
  \end{equation}   
  which completes the proof.
  \end{proof}

\subsection{First proposition}
We now prove the induction step for the Pieri rule in the
case of $\tilde e_n$. 
\begin{proposition} \label{mainprop}
If the Pieri rules stated in Theorem~\ref{theo} hold for $e_{n}$, then they also hold for
$\tilde e_n$.
\end{proposition}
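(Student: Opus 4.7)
The plan is to apply the decomposition $\tilde e_n = (\tilde e_0 - \tilde Q)\,e_n + e_n\,\tilde Q$ from \eqref{commu1v2} and extract the coefficient of $P_\Omega^{(\alpha)}$ on both sides, having already restricted to vertical $\tilde n$-strips by Lemma~\ref{lemmavert}. Write $x_1,\ldots,x_n$ and $y_1,\ldots,y_{n+1}$ for the $x$- and $y$-labels of $\Omega/\Lambda$. The first move is to Laplace-expand the target determinant along its top row of $1$'s,
\[
{\rm Det}_{\Omega/\Lambda} = \sum_{j=1}^{n+1} (-1)^{j+1}\,M_j,
\]
where $M_j$ is the minor obtained by deleting the top row and the $j$-th column. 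The goal is then to match, for each $j$, the $j$-th term with a specific contribution produced by one (or the sum of two) of the pieces on the right-hand side of \eqref{commu1v2}.

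The choice of $j$ corresponds to identifying the \emph{extra} $y$-label that would not appear if $e_n$ alone were acting, and there are two scenarios. If $y_j$ labels a new circle of $\Omega/\Lambda$, then both pieces of \eqref{commu1v2} contribute: one via an intermediate $\Omega^-$ obtained by deleting that circle from $\Omega$ (in the $(\tilde e_0-\tilde Q)e_n$ piece), and one via an intermediate $\Lambda^+$ obtained by adjoining that circle to $\Lambda$ (in the $e_n\,\tilde Q$ piece). If $y_j$ labels a new square of $\Omega/\Lambda$, only $e_n\,\tilde Q$ can contribute, with $\tilde Q$ placing a circle at the square's position and $e_n$ subsequently bumping it (so that from the $\Omega/\Lambda^+$ perspective the same position becomes a bumping square). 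In each case I would invoke Lemmas~\ref{lemma1} and~\ref{lemmah} to express $\psi'_{\Omega^-/\Lambda}$ and $\psi'_{\Omega/\Lambda^+}$ as $\psi'_{\Omega/\Lambda}$ times explicit hook-length ratios along the column through $y_j$; these ratios should collapse to the products $\prod_i (x_i - y_j + \alpha - 1)/(x_i - y_j + \alpha)$ and $\prod_i (x_i - y_j - 1)/(x_i - y_j)$ appearing in Lemma~\ref{lemiden}.

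The technical heart is Lemma~\ref{lemiden}: under the identification $y_j = \alpha b - a$, the prefactor $(1-i+\alpha(j-1))/\alpha$ from $\tilde Q$ in \eqref{actionQt} becomes $(y_j+1-\alpha)/\alpha$, so the two contributions for a given new-circle $y_j$ acquire weights $1-(y_j+1-\alpha)/\alpha$ and $(y_j+1-\alpha)/\alpha$, matching exactly the left-hand side of \eqref{eqiden}. The right-hand side then produces a sum over $i$ of $\alpha/\bigl((x_i-y_j+\alpha)(x_i-y_j)\bigr)$ multiplied by Vandermonde-like factors $\prod_{k\neq i}(x_k-x_i-1)/(x_k-x_i)$, which I expect to recognise as the minor $M_j$ after a further row-expansion indexed by the extra $x$-label produced when $y_j$ labels a new square. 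Sign bookkeeping follows the template already used in Lemma~\ref{lemmavert}, relating $\#(\Omega/\Lambda)$ to $\#(\Omega^-/\Lambda)+\#(\Omega/\Omega^-)$ and to $\#(\Lambda^+/\Lambda)+\#(\Omega/\Lambda^+)$ by counting circles above and below the singled-out cell, and combining them with the $(-1)^{j+1}$ from the Laplace expansion.

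The main obstacle I anticipate is the degenerate configuration when an $x_i$ shares a row with $y_j$, so that $[x_i;y_j]_\alpha = 1$ by \eqref{defxy} and the factorised hook-ratios develop removable singularities that block the direct application of Lemma~\ref{lemiden}. I would resolve this by the limiting procedure of Remark~\ref{remark}: first carry out the entire argument under the genericity assumption that no $x$ and $y$ labels coincide or differ by $\alpha$ in a shared row, and then recover the degenerate case by letting the appropriate differences $x_{I_\ell}-y_{J_\ell}$ and $x_{i_r}-y_{j_r}+\alpha$ tend to zero column by column, producing the correct $\mathrm{Det}_{\Omega/\Lambda}$ in the limit. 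Assembling these ingredients completes the inductive step and, by induction together with the base case in Lemma~\ref{lem0} and the parallel argument for $e_n = \{\tilde e_{n-1},q^\perp\}/n$, proves the Pieri rules.
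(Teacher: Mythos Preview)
Your overall architecture matches the paper's: use \eqref{commu1v2}, organize by the distinguished $y_j$, invoke Lemma~\ref{lemiden}, and then pass to the degenerate configurations via the limiting procedure of Remark~\ref{remark}. However, there is a genuine gap in how you propose to connect Lemma~\ref{lemiden} to the target determinant.

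You plan to Laplace-expand ${\rm Det}_{\Omega/\Lambda}$ along the top row and then match each summand $(-1)^{j+1}M_j$ individually with the $j$-th contribution from the right-hand side of \eqref{commu1v2}. This term-by-term matching does not work. What actually happens is that the minor $M_j$ arises directly from the induction hypothesis on $e_n$ (it is ${\rm Det}_{\Omega^{(j)}/\Lambda}={\rm Det}_{\Omega/\Lambda^{(j)}}$), while the extra hook-length factors you extract from $\psi'_{\Omega^-/\Lambda}$ and $\psi'_{\Omega/\Lambda^+}$ produce exactly the left-hand side of \eqref{eqiden} as the \emph{coefficient} of $M_j$. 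So the full sum is $\sum_j (-1)^{j-1} C_j\, M_j$ with
\[
C_j \;=\; 1 \;-\; \sum_{i=1}^n [x_i;y_j]_\alpha\,\Bigl(\tfrac{x_i+1-\alpha}{\alpha}\Bigr)\prod_{k\neq i}\tfrac{x_k-x_i-1}{x_k-x_i}
\]
by Lemma~\ref{lemiden}. The point is not that $C_j=1$ (it is not), but that the correction $C_j-1$ is a fixed linear combination of the entries $[x_i;y_j]_\alpha$ with coefficients $f_i(x)$ \emph{independent of $j$}. Hence $\sum_j(-1)^{j-1}C_jM_j$ is the determinant with first row $(C_1,\dots,C_{n+1})$, and the row operation $R_1\mapsto R_1+\sum_i f_i(x)R_{i+1}$ converts it to the target. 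The cancellation is global, not local, and your phrase ``which I expect to recognise as the minor $M_j$ after a further row-expansion'' misreads the mechanism.

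A second, smaller gap: your one-line claim that the hook ratios ``should collapse to the products $\prod_i(x_i-y_j+\alpha-1)/(x_i-y_j+\alpha)$ and $\prod_i(x_i-y_j-1)/(x_i-y_j)$'' hides real work. In the paper this is a careful case analysis showing that, after factoring out $\psi'_{\Omega/\Lambda}$ cell by cell, the residual contributions come from exactly four sources (cells above the distinguished circle in its column, and cells in its row, for each of the two pieces of \eqref{commu1v2}), each giving one factor of the displayed products. Lemmas~\ref{lemma1} and~\ref{lemmah} are used here, but you still need to verify case by case (the six row-ending types of \eqref{sixcases}) that $c_{\Omega/\Lambda}(s)$ factors out cleanly and to identify the residues; this is where most of the length of the paper's proof lies.
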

\begin{proof}
We have shown in Lemma~\ref{lemmavert} that only vertical  $\tilde{n}$-strips can occur.
We thus have left to show that
if $\Omega/\Lambda$ is a vertical  $\tilde{n}$-strip, then \eqref{commu1v2}
implies by induction that
\begin{equation} \label{toprove}
 \tilde v_{\Lambda \Omega}=  (-1)^{\#(\Omega/\Lambda)} \psi_{\Omega/\Lambda}' {\rm Det_{\Omega/\Lambda}} 
\end{equation}

\noindent{\it Factorized term.} 
We will first show that the factorized term $\psi_{\Omega/\Lambda}'$ is as desired. In order to do so,
we will show that the contribution of a cell 
$s$ coming
from $(\tilde e_0 - \tilde Q)e_n$ and  $e_n \tilde Q$  contains 
$c_{\Omega/\Lambda}(s)$ (as wanted) plus possibly an extra contribution that we will keep track of.  
They will then be used to construct $(-1)^{\#(\Omega/\Lambda)}{\rm Det_{\Omega/\Lambda}}$.  The extra contributions are as follows:
\begin{enumerate}
\item[({\bf C1})] A cell $s$  above the new circle in $\tilde e_0 - \tilde Q$ has an extra contribution
of $h^A_{\Omega/\Omega^-}(s)$ if there is a bumping square or a new square in its row
\item[({\bf C2})] A cell $s$ above the new circle in $\tilde Q$  has an extra contribution
of $h^A_{\Lambda^+/\Lambda}(s)$ if there is a bumping square or a new square in its row
\item[({\bf C3})] A cell $s$ in the row of the new circle in  $\tilde e_0 - \tilde Q$ has an extra contribution
of $h^B_{\Omega^-/\Lambda}(s)$ if there is a  bumping square or a new square in its column
\item[({\bf C4})] A cell $s$ in the row of the new circle in $\tilde Q$ has an extra contribution
of $h^B_{\Omega/\Lambda^+}(s)$ if there is a  bumping square or a new square in its column
\end{enumerate}
We now prove that once the factorized term $c_{\Omega/\Lambda}(s)$ has been extracted, we are left with
the four aforementioned extra contributions.

{\bf (C1)}.  Consider a cell $s$ in the column above the new circle in 
$\tilde e_0 - \tilde Q$ (which acts after $e_n$).  Recall that the contribution of  $\tilde Q$ is that
of $\tilde e_0$, which we have seen corresponds to the six possible hooks in \eqref{sixcases}.  
The contribution of $\tilde e_0-\tilde Q$ is of the form $A=C$ and corresponds to $h_{\Omega/\Omega^-}^A(s)$, 
while the contribution of
$e_n$ is $c_{\Omega^-/\Lambda}(s)$.

Using Lemma~\ref{lemma1} and Lemma~\ref{lemmah}, 
the first three possible hooks 
(those that have at most a new circle in their row) thus give
$$c_{\Omega/\Lambda}(s) = h_{\Omega/\Lambda}^D(s) = h_{\Omega/\Omega^-}^D(s) \, 
h_{\Omega^-/\Lambda}^D(s) = h_{\Omega/\Omega^-}^A(s) \, h_{\Omega^-/\Lambda}^D(s)=
 h_{\Omega/\Omega^-}^A(s) \,   
 c_{\Omega^-/\Lambda}(s) $$ where $D$ stands for $A$ or $C$.  Those hooks do not give extra contributions since the contribution 
 of $\tilde e_0-\tilde Q$ was used to obtain $c_{\Omega/\Lambda}(s)$.

 The two hooks where the contribution is 1 work trivially and give an extra factor 
 $h_{\Omega/\Omega^-}^A(s)$ coming from  the action of $\tilde e_0-\tilde Q$
 (those have a new square or both a bumping square and a new circle in their row).

In the hook whose contribution is $B$ (with a bumping square in its row), 
the circle below $s$ is not seen by the leg and 
thus $c_{\Omega/\Lambda}(s)=c_{\Omega^-/\Lambda}(s)$ as wanted.  We are thus also left with an extra factor $h_{\Omega/\Omega^-}^A(s)$ in that case.

{\bf (C2)}. Consider now a cell $s$ in the column above the new circle in 
$\tilde Q$ (which acts this time before $e_n$).  The contribution of $\tilde Q$ is of the form $A=C$ and corresponds to $h_{\Lambda^+/\Lambda}^A(s)$, 
while the contribution of
$e_n$ is $c_{\Omega/\Lambda^+}(s)$.

Using Lemma~\ref{lemma1} and Lemma~\ref{lemmah},  the first three possible hooks (those that have at most a new circle in their row) are
give
$$c_{\Omega/\Lambda}(s)=  h_{\Omega/\Lambda^+}^D(s) \,  h_{\Lambda^+/\Lambda}^D(s) =  h_{\Omega/\Lambda^+}^D(s) \,  h_{\Lambda^+/\Lambda}^A(s) =  c_{\Omega/\Lambda^+}(s) \,  h_{\Lambda^+/\Lambda}^A(s) $$
and have thus no extra contributions. 

 The two hooks where the contribution is 1 again work trivially and give an extra factor $h_{\Lambda^+/\Lambda}^A(s)$ (those have a new square or a bumping square and a new circle in their row).
 
In the hook whose contribution is $B$ (with a bumping square in its row), 
the circle below $s$ is not seen by the leg and 
thus $c_{\Omega/\Lambda}(s)=c_{\Omega/\Lambda^+}(s)$ as wanted.  We are thus also left with an extra contribution $h_{\Lambda^+/\Lambda}^A(s)$  in that case.

{\bf (C3)}. Consider this time a cell $s$ in the row to the left of the new circle in 
$(\tilde e_0 - \tilde Q)$ (which acts after $e_n$). As in Case (1), the contribution of $\tilde e_0-\tilde Q$ is of the form $A=C$ and corresponds to $h_{\Omega/\Omega^-}^A(s)$, 
while the contribution of
$e_n$ is $c_{\Omega^-/\Lambda}(s)$.   
If the row of $s$ does not have a new cell in $\Omega^-/\Lambda$, then the contribution 
from the action of $e_n$ (supposing that there are new cells of $\Omega^-/\Lambda$ in the
column of $s$) is of type $AB$.  We have that $h^A_{\Omega^-/\Lambda}(s)=c_{\Omega/\Lambda}(s)$ since
the new circle in the arm of $s$ does not affect $h^A_{\Omega^-/\Lambda}(s)$.  The extra contribution is
thus $h^B_{\Omega^-/\Lambda}(s)$.  Note that  $h^B_{\Omega^-/\Lambda}(s)=1$ if there is only a new circle in the column of $s$, which means that there needs to be a bumping square or a new square in the column
of $s$ to have an extra contribution.

The only other option is for the row of $s$ to contain a bumping square in $\Omega^-/\Lambda$.
This time, the contribution from the  the action of $e_n$ is of type $B$.   But 
$c_{\Omega/\Lambda}(s)=1$ and thus the  extra factor is again 
$c_{\Omega^-/\Lambda}(s)=h^B_{\Omega^-/\Lambda}(s)$. Note again that  $h^B_{\Omega^-/\Lambda}(s)=1$ if there is only a new circle in the column of $s$, which means that there needs to be a bumping square or a new square in the column
of $s$ to have an extra contribution.

{\bf (C4)}.
Consider finally a cell $s$ in the row to the left of the new circle in 
$\tilde Q$ (which acts before $e_n$). As in Case (2), the contribution of $\tilde Q$ is of the form $A=C$ and corresponds to $h_{\Lambda^+/\Lambda}^A(s)$, 
while the contribution of
$e_n$ is $c_{\Omega/\Lambda^+}(s)$. 
If the row of $s$ does not have a new cell in $\Omega/\Lambda^+$ (it would necessarily 
be a bumping square), then the contribution of cell $s$ from the action of $e_n$ is of type
$AB$.  Again $h^A_{\Omega/\Lambda^+}(s)=c_{\Omega/\Lambda}(s)$ since
the new circle in the arm of $s$ does not affect $h^A_{\Omega/\Lambda^+}(s)$.  The extra factor is
thus $h^B_{\Omega/\Lambda^+}(s)$.   We have that  $h^B_{\Omega/\Lambda^+}(s)=1$ if there is only a new circle in the column of $s$, which means that there needs to be a bumping square or a new square in the column
of $s$ to have an extra contribution.

The only other option is for the row of $s$ to contain a bumping square in $\Omega/\Lambda^+$.
This time, the contribution from the  the action of $e_n$ is of type $B$.   But 
$c_{\Omega/\Lambda}(s)=1$ (there is a new square in the row of $s$ in $\Omega/\Lambda$) 
and thus the  extra factor is again 
$c_{\Omega/\Lambda^+}(s)=h^B_{\Omega/\Lambda^+}(s)$.  We have again that  $h^B_{\Omega/\Lambda^+}(s)=1$ if there is only a new circle in the column of $s$, which means that there needs to be a bumping square or a new square in the column
of $s$ to have an extra contribution.

\noindent{\it Remaining terms (special case).}
Having now extracted $\psi'_{\Omega/\Lambda}$, to prove \eqref{toprove} 
we have left to show that the remaining terms give
$(-1)^{\#(\Omega/\Lambda)}{\rm Det_{\Omega/\Lambda}}$.  We will first do the special 
case where there  are no new squares and no row with a bumping square and a new circle.  
We will see later that the general case can be deduced from that special one.

We first compute the remaining terms in 
 $(\tilde{e}_{0}-\tilde Q)e_{n}$.  Since $(\tilde{e}_{0}-\tilde Q)$ acts after $e_n$ and since it
adds a circle, we need to consider all possible $\Omega^{-}$.  By hypothesis, there are $n+1$ 
new circles whose positions are $y_1,\dots,y_{n+1}$.  Let  $\Omega^{(j)}$ be $\Omega$ without its
circle in position $y_j$.  By {\bf (C1)}, for all $s$ above $y_j$ and in the row of some
$x_i$, we have an extra contribution 
$$
h_{\Omega/\Omega^{(j)}}^A(s)= \frac{x_i-y_j+\alpha-1}{x_i-y_j+\alpha}
$$
By {\bf (C3)} this time, for all  $s$ in the row of  $y_j$ and in the column of some
$x_i$, we have an extra contribution
$$
h^B_{\Omega^{(j)}/\Lambda}(s)= \frac{y_j-x_i-\alpha+1}{y_j-x_i-\alpha}=\frac{x_i-y_j+\alpha-1}{x_i-y_j+\alpha}
$$
We also need to consider the extra factor coming from the new circle in position $y_j$ in the action of
$(\tilde{e}_{0}-\tilde Q)$, which is given by
$$
1-\frac{y_j+1-\alpha}{\alpha}
$$
The remaining contribution from $\Omega^{(j)}$ in $(\tilde{e}_{0}-\tilde Q)e_{n}$ is thus
\begin{equation} \label{termgen1}
(-1)^{\#(\Omega^{(j)}/\Lambda)+\#(\Omega/\Omega^{(j)})}\left(1-\frac{y_j+1-\alpha}{\alpha}\right) \left( \prod_{i=1}^n \frac{x_i-y_j+\alpha-1}{x_i-y_j+\alpha} \right) {\rm Det}_{\Omega^{(j)}/\Lambda} 
\end{equation}
where 
$$
{\rm Det}_{\Omega^{(j)}/\Lambda} =\left|
\begin{array}{ccccc}
\displaystyle{[x_1; y_1]_{\alpha}}& \cdots & \displaystyle{\widehat{[x_1; y_j]_{\alpha}}}  &\cdots & \displaystyle{[x_1; y_n]_{\alpha}}  \\ 
\displaystyle{[x_2; y_1]_{\alpha}}& \cdots   & \displaystyle{\widehat{[x_2; y_j]_{\alpha}}}  & \cdots & \displaystyle{[x_2; y_n]_{\alpha}}  \\
\vdots& & \vdots& \ddots& \vdots \\
\displaystyle{[x_n; y_1]_{\alpha}}& \cdots  & \displaystyle{\widehat{[x_n; y_j]_{\alpha}}}  & \cdots & \displaystyle{[x_n; y_n]_{\alpha}}  
\end{array}
\right| 
$$
with $\displaystyle{\widehat{[x_\ell; y_j]_{\alpha}}}$ meaning that the term does not exist.  Finally,
$$
(-1)^{\#(\Omega/\Lambda)} = (-1)^{\#(\Omega^{(j)}/\Lambda)+\# (\text{preexisting circles above } y_j)} 
$$
and 
$$
(-1)^{\#(\Omega/\Omega^{(j)})} = (-1)^{\# (\text{preexisting circles above } y_j) + \# (\text{new circles above } y_j )} 
$$
Using the fact that the number of new circles above $y_j$ is $j-1$, we obtain that
\begin{equation} \label{eqsign1}
(-1)^{\#(\Omega^{(j)}/\Lambda)+\#(\Omega/\Omega^{(j)})}= (-1)^{\#(\Omega/\Lambda)+j-1}
\end{equation}
The remaining contribution from $\Omega^{(j)}$ in $(\tilde{e}_{0}-\tilde Q)e_{n}$ is thus
\begin{equation}
(-1)^{\#(\Omega/\Lambda)+j-1}\left(1-\frac{y_j+1-\alpha}{\alpha}\right) \left( \prod_{i=1}^n \frac{x_i-y_j+\alpha-1}{x_i-y_j+\alpha} \right) {\rm Det}_{\Omega^{(j)}/\Lambda} 
\end{equation}
for a total remaining contribution of
\begin{equation} \label{contrib1}
(-1)^{\#(\Omega/\Lambda)} \sum_{j=1}^{n+1} (-1)^{j-1}
\left(1-\frac{y_j+1-\alpha}{\alpha}\right) \left( \prod_{i=1}^n \frac{x_i-y_j+\alpha-1}{x_i-y_j+\alpha} \right) {\rm Det}_{\Omega^{(j)}/\Lambda} 
\end{equation}

We now compute the remaining contribution of $e_n \tilde Q$. Since we are supposing that there are no new squares, the new circle stemming from the action of  $\tilde Q$ will also be a new circle in $\Omega/\Lambda$.  The $n+1$ new circles
in $\Omega/\Lambda$ are labeled again $y_1,\dots,y_{n+1}$.  We let $\Lambda^{(j)}$ be equal to $\Lambda$
with an extra circle in position $y_j$.

By {\bf (C2)}, for all $s$ above $y_j$ and in the row of some
$x_i$, we have an extra contribution
$$
h_{\Lambda^{(j)}/\Lambda}^A(s)= \frac{x_i-y_j-1}{x_i-y_j}
$$
Similarly, by {\bf (C4)},
for all  $s$ in the row of  $y_j$ and in the column of some
$x_i$,
we have an extra contribution
$$
h^B_{\Omega/\Lambda^{(j)}}(s)= \frac{y_j-x_i+1}{y_j-x_i}=\frac{x_i-y_j-1}{x_i-y_j}
$$
We also need to consider the extra factor coming from the new circle in position $y_j$ in the action of
$\tilde Q$, which is given by 
$$
\frac{y_j+1-\alpha}{\alpha}
$$
The remaining contribution from $\Omega^{(j)}$ in $e_{n} \tilde Q$ is thus
\begin{equation}
  \label{termgen2}
(-1)^{\#(\Lambda^{(j)}/\Lambda)+\#(\Omega/\Lambda^{(j)})}\left(\frac{y_j+1-\alpha}{\alpha}\right) \left( \prod_{i=1}^n \frac{x_i-y_j-1}{x_i-y_j} \right) {\rm Det}_{\Omega/\Lambda^{(j)}} 
\end{equation}
with
$$
{\rm Det}_{\Omega/\Lambda^{(j)}} =\left|
\begin{array}{ccccc}
\displaystyle{[x_1; y_1]_{\alpha}}& \cdots & \displaystyle{\widehat{[x_1; y_j]_{\alpha}}}  &\cdots & \displaystyle{[x_1; y_n]_{\alpha}}  \\ 
\displaystyle{[x_2; y_1]_{\alpha}}& \cdots   & \displaystyle{\widehat{[x_2; y_j]_{\alpha}}}  & \cdots & \displaystyle{[x_2; y_n]_{\alpha}}  \\
\vdots& & \vdots& \ddots& \vdots \\
\displaystyle{[x_n; y_1]_{\alpha}}& \cdots  & \displaystyle{\widehat{[x_n; y_j]_{\alpha}}}  & \cdots & \displaystyle{[x_n; y_n]_{\alpha}}  
\end{array}
\right| 
$$
where we stress that ${\rm Det}_{\Omega/\Lambda^{(j)}}$ is equal to  $ {\rm Det}_{\Omega^{(j)}/\Lambda}$. 
 Finally,
$$
(-1)^{\#(\Omega/\Lambda)} = (-1)^{\#(\Omega/\Lambda^{(j)})-\# (\text{new circles below } y_j)-\# (\text{bumping squares below } y_j)+\# (\text{preexisting circles above } y_j)} 
$$
and 
$$
(-1)^{\#(\Lambda^{(j)}/\Lambda)} = (-1)^{\# (\text{preexisting circles above } y_j) +\# (\text{bumping squares above } y_j) } 
$$
Using the fact that the total number of new circles and bumping squares is $2n+1$, we have that
\begin{equation}
\label{eqsign1p}
(-1)^{\#(\Omega/\Lambda^{(j)})+\#(\Lambda^{(j)}/\Lambda)}= (-1)^{\#(\Omega/\Lambda)+\# (\text{new circles above } y_j)}
= (-1)^{\#(\Omega/\Lambda)+j-1}
\end{equation}
Hence, the remaining contribution from $\Omega^{(j)}$ in $e_{n} \tilde Q$ is 
$$
(-1)^{\#(\Omega/\Lambda)+j-1}\left(\frac{y_j+1-\alpha}{\alpha}\right) \left( \prod_{i=1}^n \frac{x_i-y_j-1}{x_i-y_j} \right) {\rm Det}_{\Omega/\Lambda^{(j)}} 
$$
for a total remaining contribution of
\begin{equation} \label{contrib2}
(-1)^{\#(\Omega/\Lambda)} \sum_{j=1}^{n+1} (-1)^{j-1}
\left(\frac{y_j+1-\alpha}{\alpha}\right) \left( \prod_{i=1}^n \frac{x_i-y_j-1}{x_i-y_j} \right) {\rm Det}_{\Omega/\Lambda^{(j)}} 
\end{equation}
Adding \eqref{contrib1} and \eqref{contrib2}, using the fact that ${\rm Det}_{\Omega/\Lambda^{(j)}}= {\rm Det}_{\Omega^{(j)}/\Lambda}$, and comparing with \eqref{toprove}, we see that we have left to show that
\begin{equation}
\begin{split}
 \sum_{j=1}^{n+1} (-1)^{j-1}
 \Biggl[& \left(1-  \frac{y_j+1-\alpha}{\alpha}\right) \left( \prod_{i=1}^n \frac{x_i-y_j+\alpha-1}{x_i-y_j+\alpha} \right)  
 \\
& \qquad+ 
\left(\frac{y_j+1-\alpha}{\alpha}\right) \left( \prod_{i=1}^n \frac{x_i-y_j-1}{x_i-y_j} \right) \Biggr] {\rm Det}_{\Omega/\Lambda^{(j)}}  = {\rm Det}_{\Omega/\Lambda}
\end{split}
\end{equation}
Or more explicitly, that
\begin{equation} \label{bigidentity}
\begin{split}
 \sum_{j=1}^{n+1} (-1)^{j-1}
 \Biggl[& \left(1-  \frac{y_j+1-\alpha}{\alpha}\right) \left( \prod_{i=1}^n \frac{x_i-y_j+\alpha-1}{x_i-y_j+\alpha} \right)  
 \\
& + 
\left(\frac{y_j+1-\alpha}{\alpha}\right) \left( \prod_{i=1}^n \frac{x_i-y_j-1}{x_i-y_j} \right) \Biggr] \left|
\begin{array}{ccccc}
\displaystyle{[x_1; y_1]_{\alpha}}& \cdots & \displaystyle{\widehat{[x_1; y_j]_{\alpha}}}  &\cdots & \displaystyle{[x_1; y_n]_{\alpha}}  \\ 
\displaystyle{[x_2; y_1]_{\alpha}}& \cdots   & \displaystyle{\widehat{[x_2; y_j]_{\alpha}}}  & \cdots & \displaystyle{[x_2; y_n]_{\alpha}}  \\
\vdots& & \vdots& \ddots& \vdots \\
\displaystyle{[x_n; y_1]_{\alpha}}& \cdots  & \displaystyle{\widehat{[x_n; y_j]_{\alpha}}}  & \cdots & \displaystyle{[x_n; y_n]_{\alpha}}  
\end{array}
\right|   \\
& \qquad \qquad \qquad \qquad \qquad \qquad= \left|
\begin{array}{cccc}
1 & 1 & \cdots & 1 \\
\displaystyle{[x_1; y_1]_{\alpha}} & \displaystyle{[x_1; y_2]_{\alpha}}  &\cdots & \displaystyle{[x_1; y_{n+1}]_{\alpha}}  \\ 
\displaystyle{[x_2; y_1]_{\alpha}}  & \displaystyle{[x_2; y_2]_{\alpha}}  & \cdots & \displaystyle{[x_2; y_{n+1}]_{\alpha}}  \\
\vdots& \vdots& \ddots& \vdots \\
\displaystyle{[x_n; y_1]_{\alpha}}  & \displaystyle{[x_n; y_2]_{\alpha}}  & \cdots & \displaystyle{[x_n; y_{n+1}]_{\alpha}}  
\end{array}
\right| 
\end{split}
\end{equation}
The  left-hand-side of \eqref{bigidentity} can be rewritten as
\begin{equation}
\left| \begin{array}{cccc}
C_1 & C_2 & \cdots & C_{n+1} \\
\displaystyle{[x_1; y_1]_{\alpha}} & \displaystyle{[x_1; y_2]_{\alpha}}  &\cdots & \displaystyle{[x_1; y_{n+1}]_{\alpha}}  \\ 
\displaystyle{[x_2; y_1]_{\alpha}}  & \displaystyle{[x_2; y_2]_{\alpha}}  & \cdots & \displaystyle{[x_2; y_{n+1}]_{\alpha}}  \\
\vdots& \vdots& \ddots& \vdots \\
\displaystyle{[x_n; y_1]_{\alpha}}  & \displaystyle{[x_n; y_2]_{\alpha}}  & \cdots & \displaystyle{[x_n; y_{n+1}]_{\alpha}}  
\end{array}
\right|
\end{equation}
where, by  Lemma~\ref{lemiden},
\begin{equation}
C_j = 1 - \sum_{i=1}^n [x_i;y_j]_{\alpha}\left( \frac{x_i+1-\alpha}{\alpha} \right) \left( \prod_{k \neq i} \frac{x_k-x_i-1}{x_k-x_i}  \right)
\end{equation}  
Observing that
$$
f_i(x) = \left( \frac{x_i+1-\alpha}{\alpha} \right) \left( \prod_{k \neq i} \frac{x_k-x_i-1}{x_k-x_i}  \right)
$$
does not depend on $j$, we have that
\begin{equation}
\left| \begin{array}{cccc}
C_1 & C_2 & \cdots & C_{n+1} \\
\displaystyle{[x_1; y_1]_{\alpha}} & \displaystyle{[x_1; y_2]_{\alpha}}  &\cdots & \displaystyle{[x_1; y_{n+1}]_{\alpha}}  \\ 
\displaystyle{[x_2; y_1]_{\alpha}}  & \displaystyle{[x_2; y_2]_{\alpha}}  & \cdots & \displaystyle{[x_2; y_{n+1}]_{\alpha}}  \\
\vdots& \vdots& \ddots& \vdots \\
\displaystyle{[x_n; y_1]_{\alpha}}  & \displaystyle{[x_n; y_2]_{\alpha}}  & \cdots & \displaystyle{[x_n; y_{n+1}]_{\alpha}}  
\end{array} \right|
= \left|    \begin{array}{cccc}
1 & 1 & \cdots & 1 \\
\displaystyle{[x_1; y_1]_{\alpha}} & \displaystyle{[x_1; y_2]_{\alpha}}  &\cdots & \displaystyle{[x_1; y_{n+1}]_{\alpha}}  \\ 
\displaystyle{[x_2; y_1]_{\alpha}}  & \displaystyle{[x_2; y_2]_{\alpha}}  & \cdots & \displaystyle{[x_2; y_{n+1}]_{\alpha}}  \\
\vdots& \vdots& \ddots& \vdots \\
\displaystyle{[x_n; y_1]_{\alpha}}  & \displaystyle{[x_n; y_2]_{\alpha}}  & \cdots & \displaystyle{[x_n; y_{n+1}]_{\alpha}}  
\end{array}
\right| 
\end{equation}
since the first row of the matrix to the right can be obtained by doing the transformation $R_1 \to R_1 + f(x_1)R_2 +\cdots + f(x_n) R_{n+1}$ in the matrix to the left.

\noindent{\it Remaining terms (general case).}
In the general case, there are labels $x$ and $y$ in the same row or the same column.  We will denote
by ${\rm Rel}_{\Omega/\Lambda}$ the set of relations between those variables.  For instance, using
\begin{equation}
\tableau[scY]{ &&& \fl & \bl \,\, x_{1} & \bl \, y_{1}\\ &&& \fl & \bl  \, \, x_{2} &  \bl \, y_{2} \\ &&& \fl & \bl \, \, x_{3} & \bl \, y_{3} \\   \tf \ocircle &  \bl \cerclep &  \bl \, \, x_{4} & \bl  \, y_{4}}
\end{equation}
we find the relations
$$
{\rm Rel}_{\Omega/\Lambda}= \{ x_1=y_1, x_2=y_2, x_3=y_3, x_4=y_4-\alpha,x_1=x_2+1,x_2=x_3+1\}
$$
We need to  show that in the general case the remaining terms also give
$(-1)^{\#(\Omega/\Lambda)}{\rm Det_{\Omega/\Lambda}}$. 
 This will be done by showing that
the new identity induced by the remaining terms is  simply a limiting case of 
the identity obtained in \eqref{bigidentity}.  To be more precise, supposing that
\eqref{bigidentity} corresponds to $\mathcal A= \mathcal B$, where
$\mathcal A$ and $\mathcal B$ are respectively the l.h.s and r.h.s of \eqref{bigidentity}, the new identity
will be
\begin{equation} \label{newbigidentity}
\lim_{{\rm Rel}_{\Omega/\Lambda}} \prod (y_{j_r}-x_{i_r}-\alpha) \prod (x_{I_\ell}-y_{J_\ell}) \, \mathcal A =
\lim_{{\rm Rel}_{\Omega/\Lambda}} \prod (y_{j_r}-x_{i_r}-\alpha) \prod (x_{I_\ell}-y_{J_\ell}) \, \mathcal B 
\end{equation}
where the pairs $(x_{i_r}, y_{j_r})$ are those such that $x_{i_r}= y_{j_r}-\alpha$ (that is, those such that $x_{i_r}$ labels
a cell just to the left of the cell labeled by $y_{j_r}$)
while the pairs $(x_{I_\ell}, y_{J_\ell})$
are those such that  $x_{I_\ell}= y_{J_\ell}$ (that is, those such that
$x_{I_\ell}$ and  $y_{J_\ell}$ label  cells in the same position).  The fact  that $\mathcal A= \mathcal B$ will then
readily imply \eqref{newbigidentity}.  From Remark~\ref{remark}, we have that
$$
\lim_{{\rm Rel}_{\Omega/\Lambda}} \prod (y_{j_r} -x_{i_r}-\alpha) \prod (x_{I_\ell}-y_{J_\ell}) \, \mathcal B =
(-1)^{\#(\Omega/\Lambda)} {\rm Det_{\Omega/\Lambda}}
$$
where
$$
\mathcal B =
(-1)^{\#(\Omega/\Lambda)}
\left|
\begin{array}{cccc}
1 & 1 & \cdots & 1 \\
\displaystyle{[x_1; y_1]'_{\alpha}} & \displaystyle{[x_1; y_2]'_{\alpha}}  &\cdots & \displaystyle{[x_1; y_{n+1}]'_{\alpha}}  \\ 
\displaystyle{[x_2; y_1]'_{\alpha}}  & \displaystyle{[x_2; y_2]'_{\alpha}}  & \cdots & \displaystyle{[x_2; y_{n+1}]'_{\alpha}}  \\
\vdots& \vdots& \ddots& \vdots \\
\displaystyle{[x_n; y_1]'_{\alpha}}  & \displaystyle{[x_n; y_2]'_{\alpha}}  & \cdots & \displaystyle{[x_n; y_{n+1}]'_{\alpha}}  
\end{array}
\right|  =: (-1)^{\#(\Omega/\Lambda)}  {\rm Det_{\Omega/\Lambda}'}
$$
with
\begin{equation}
[x_i;y_j]'_\alpha = \frac{\alpha}{(x_i-y_j+\alpha)(x_i-y_j)} 
\end{equation}

We now show that the contribution of the remaining terms is
$$
\lim_{{\rm Rel}_{\Omega/\Lambda}} \prod (y_{j_r}-x_{i_r}-\alpha) \prod (x_{I_\ell}-y_{J_\ell}) \, \mathcal A
$$
For this, it suffices to show that the contribution of $y_j$ is equal to
\begin{equation} \label{geneq}
\begin{split}
&  (-1)^{\#(\Omega/\Lambda)+j-1} \lim_{{\rm Rel}_{\Omega/\Lambda}} \prod (y_{j_r}-x_{i_r}-\alpha) \prod (x_{I_\ell}-y_{J_\ell})  \, \times \\
& \qquad  
  \Biggl[ \left(1-  \frac{y_j+1-\alpha}{\alpha}\right)
    \left( \prod_{i=1}^n \frac{x_i-y_j+\alpha-1}{x_i-y_j+\alpha} \right)  
+ 
\left(\frac{y_j+1-\alpha}{\alpha}\right) \left( \prod_{i=1}^n \frac{x_i-y_j-1}{x_i-y_j} \right) \Biggr] {\rm Det}'_{\Omega/\Lambda^{(j)}}  
\end{split}
\end{equation}
where, as before, ${\rm Det}'_{\Omega/\Lambda^{(j)}}$ is equal to ${\rm Det}_{\Omega/\Lambda^{(j)}}$
with $[x_i;y_\ell]_\alpha$ replaced by  $[x_i;y_\ell]'_\alpha$.  In order to show \eqref{geneq}, we will show that the extra contribution
stemming from $(\tilde e_0 -Q) e_n$ is
\begin{equation} \label{geneq1}
 (-1)^{\#(\Omega/\Lambda)+j-1} \! \!\lim_{{\rm Rel}_{\Omega/\Lambda}} \prod (y_{j_r}-x_{i_r}-\alpha) \prod (x_{I_\ell}-y_{J_\ell}) 
 \Biggl[ \left(1-  \frac{y_j+1-\alpha}{\alpha}\right) \left( \prod_{i=1}^n \frac{x_i-y_j+\alpha-1}{x_i-y_j+\alpha} \right)\Biggr] {\rm Det}'_{\Omega^{(j)}/\Lambda}  
\end{equation}
(note that we used the fact that, by definition, ${\rm Det}'_{\Omega^{(j)}/\Lambda}= {\rm Det}'_{\Omega/\Lambda^{(j)}}$)
while that stemming from $e_n Q$ is
\begin{equation} \label{geneq2}
 (-1)^{\#(\Omega/\Lambda)+j-1} \! \!
  \lim_{{\rm Rel}_{\Omega/\Lambda}} \prod (y_{j_r}-x_{i_r}-\alpha) \prod (x_{I_\ell}-y_{J_\ell}) 
 \Biggl[ 
\left(\frac{y_j+1-\alpha}{\alpha}\right) \left( \prod_{i=1}^n \frac{x_i-y_j-1}{x_i-y_j} \right) \Biggr] {\rm Det}'_{\Omega/\Lambda^{(j)}}  
\end{equation}

There are many cases to consider.  Suppose first that $y_j$ labels a circle and 
that there is no label $x$ in the column and the row of $y_j$.  From the action of
$(\tilde e_0 -\tilde Q) e_n$,  we get by {\bf (C1)} that for all $s$ above $y_j$ and in the row of some
$x_i$, we have an extra contribution of
$$
h^A_{\Omega/\Omega^{(j)}}(s)= \frac{x_i-y_j+\alpha-1}{x_i-y_j+\alpha}
$$
Note that a circle (with a $y$ label) to the right of the cell corresponding to $x_i$ does not change the result since
$h^A_{\Omega/\Omega^{(j)}}(s)$ is not affected by a circle at the end of the row.

By {\bf (C3)} this time, for all  $s$ in the row of  $y_j$ and in the column of some
$x_i$, we have an extra contribution
$$
h^B_{\Omega^{(j)}/\Lambda}(s)= \prod_i \frac{y_j-x_i-\alpha+1}{y_j-x_i-\alpha}= \prod_i \frac{x_i-y_j+\alpha-1}{x_i-y_j+\alpha}
$$
where the product is over all $x_i$'s in that column.  Note that the previous equality holds by Lemma~\ref{lemma1}.
Also note that if there is a circle (with a $y$ label) at the end of the column, it does not contribute since $h^B_{\Omega^{(j)}/\Lambda}(s)$ does not
take that circle into account.

The remaining contribution from $\Omega^{(j)}$ in $(\tilde{e}_{0}-\tilde Q)e_{n}$ is thus as in \eqref{termgen1}:
\begin{equation} \label{eq424}
(-1)^{\#(\Omega^{(j)}/\Lambda)+\#(\Omega/\Omega^{(j)})}\left(1-\frac{y_j+1-\alpha}{\alpha}\right) \left( \prod_{i=1}^n \frac{x_i-y_j+\alpha-1}{x_i-y_j+\alpha} \right) {\rm Det}_{\Omega^{(j)}/\Lambda} 
\end{equation}
We have in this case that (compare to the equations leading to \eqref{eqsign1} in which there were no new squares)
$$
(-1)^{\#(\Omega/\Lambda)} = (-1)^{\#(\Omega^{(j)}/\Lambda)+\# (\text{preexisting circles above } y_j)+ \# (\text{new squares above } y_j)} 
$$
and 
$$
(-1)^{\#(\Omega/\Omega^{(j)})} = (-1)^{\# (\text{preexisting circles above } y_j) + \# (\text{new circles above } y_j )} 
$$
This gives again that $(-1)^{\#(\Omega^{(j)}/\Lambda)+\#(\Omega/\Omega^{(j)})}=  (-1)^{\#(\Omega/\Lambda)+j-1}$ since the total number of new circles and new squares above $y_j$ is $j-1$.
The contribution \eqref{eq424} from $\Omega^{(j)}$ in $(\tilde{e}_{0}-\tilde Q)e_{n}$ is thus equal to \eqref{geneq1}
since
\begin{equation}
  \lim_{{\rm Rel}_{\Omega/\Lambda}} \prod (y_{j_r}-x_{i_r}-\alpha) \prod (x_{I_\ell}-y_{J_\ell}) \,
{\rm Det}'_{\Omega^{(j)}/\Lambda} = {\rm Det}_{\Omega^{(j)}/\Lambda}
\end{equation}

The remaining terms
stemming from the action of $e_n \tilde Q$ will lead to \eqref{geneq2} in a very similar way.  
By {\bf (C2)}, for all $s$ above $y_j$ and in the row of some
$x_i$, we have an extra contribution of
$$
h_{\Lambda^{(j)}/\Lambda}^A(s)= \frac{x_i-y_j-1}{x_i-y_j}
$$
while by {\bf (C4)}, for all  $s$ in the row of  $y_j$ and in the column of some
$x_i$, we have by Lemma~\ref{lemma1} an extra contribution of
$$
h^B_{\Omega/\Lambda^{(j)}}(s)= \prod_i \frac{y_j-x_i+1}{y_j-x_i}= \prod_i \frac{x_i-y_j-1}{x_i-y_j}
$$
where the product is over all $x_i$'s in that column.  The contribution of $e_n \tilde Q$
is thus exactly as in \eqref{termgen2}:
$$
(-1)^{\#(\Lambda^{(j)}/\Lambda)+\#(\Omega/\Lambda^{(j)})}\left(\frac{y_j+1-\alpha}{\alpha}\right) \left( \prod_{i=1}^n \frac{x_i-y_j-1}{x_i-y_j} \right) {\rm Det}_{\Omega/\Lambda^{(j)}} 
$$
This leads to
\eqref{geneq2} just as the previous extra contribution of $(\tilde e_0 -\tilde Q)e_n$ led to
\eqref{geneq1}.  We stress that the sign is the correct one since in this case we have
(compare this time to the equations leading to \eqref{eqsign1p} in which there were no new squares)
\begin{equation} \nonumber
  \begin{split}
    & (-1)^{\#(\Omega/\Lambda)-\#(\Omega/\Lambda^{(j)})} = \\
    & (-1)^{\# (\text{new circles below } y_j)-\# (\text{bumping squares below } y_j)+\# (\text{preexisting circles above } y_j)+\# (\text{new squares above } y_j)}
   \end{split}
\end{equation}
and 
$$
(-1)^{\#(\Lambda^{(j)}/\Lambda)} = (-1)^{\# (\text{preexisting circles above } y_j) +\# (\text{bumping squares above } y_j) } 
$$
Using the fact that the total number of new circles and bumping squares above or below $y_j$ is even
($e_n$ contains the same number of bumping squares and new circles), we conclude as wanted that
\begin{equation} \label{eqsign2}
    (-1)^{\#(\Omega/\Lambda^{(j)})+\#(\Lambda^{(j)}/\Lambda)}  =  (-1)^{\#(\Omega/\Lambda)+\# (\text{new circles above } y_j)+\# (\text{new squares above } y_j)}= (-1)^{\#(\Omega/\Lambda)+j-1}
\end{equation}

Now suppose that there is a bumping square and/or new squares above $y_j$ in its column (but not in its row).
For the action of $(\tilde e_0 -\tilde Q) e_n$,  
the
analysis is exactly as before and we get that \eqref{geneq1} holds.  In the action of $e_n \tilde Q $ the contribution is zero since
$\tilde Q$ cannot put a new circle in the position corresponding to $y_j$.  We thus have to show that in that case,
\begin{equation} \label{eqzero}
\lim_{{\rm Rel}_{\Omega/\Lambda}} \prod (y_{j_r}-x_{i_r}-\alpha) \prod (x_{I_\ell}-y_{J_\ell}) 
 \Biggl[ 
   \left(\frac{y_j+1-\alpha}{\alpha}\right) \left( \prod_{i=1}^n \frac{x_i-y_j-1}{x_i-y_j} \right) \Biggr] {\rm Det}'_{\Omega/\Lambda^{(j)}}
 = 0
\end{equation}
We have that the cell above that corresponding to $y_\ell$ has a label $x_a$ such that $x_a=y_\ell+1$ (which belongs to the
set of relations ${\rm Rel}_{\Omega/\Lambda}$).  The claim then holds due to the presence of the factor $x_a-y_\ell-1$ in the numerator of
  the l.h.s. of
\eqref{eqzero}.

Now suppose that there is a bumping square (labeled by $x_a$) in the row of $y_j$.  For the action of $(\tilde e_0 -\tilde Q) e_n$,  
the
analysis is exactly as before except that the bumping square in the row of $y_j$ contributes 1 instead of
$(x_a-y_j+\alpha-1)/(x_a-y_j+\alpha)$.  But since $x_a=y_j-\alpha$ is a relation, we have 
$$
\lim_{x_a=y_j-\alpha} (y_j-x_a-\alpha)  \frac{(x_a-y_j+\alpha-1)}{(x_a-y_j+\alpha)} =1
$$
which implies that \eqref{geneq1} still holds.   In the action of $e_n \tilde Q $ the contribution is zero since
$\tilde Q$ cannot put a new circle in the position corresponding to $y_j$.  But this is consistent with 
\begin{equation}
\lim_{{\rm Rel}_{\Omega/\Lambda}} \prod (y_{j_r}-x_{i_r}-\alpha) \prod (x_{I_\ell}-y_{J_\ell}) 
 \Biggl[ 
   \left(\frac{y_j+1-\alpha}{\alpha}\right) \left( \prod_{i=1}^n \frac{x_i-y_j-1}{x_i-y_j} \right) \Biggr] {\rm Det}'_{\Omega/\Lambda^{(j)}}
 = 0
\end{equation}
since the relation $x_a=y_j-\alpha$ in  ${\rm Rel}_{\Omega/\Lambda}$ implies that there
will be a factor $y_j-x_a-\alpha=0$ in the numerator.

Suppose that $y_j$ corresponds to a new square (also labeled by $x_a$) without new squares or a bumping square above it in its column.
In the action of $e_n \tilde Q $,
all the cells above $y_j$ contribute as if $y_j$ were a circle since the contribution $h^A_{\Lambda^{(j)}/\Lambda}$ is unchanged.
Similarly, all the cells in the row of $y_j$ contribute as if $y_j$ were a circle since $h^B_{\Omega/\Lambda^{(j)}}$ does not distinguish
between a circle and a square in its arm.  Since the cell corresponding to $y_j$ only contributes
$(y_j+\alpha-1)/\alpha$, the factor $(x_a-y_j-1)/(x_a-y_j)$ is missing in the product.
But given that $x_a=y_j$ is a relation, we have 
$$
\lim_{x_a=y_j} (x_a-y_j)  \frac{(x_a-y_j-1)}{(x_a-y_j)} =-1 
$$
This extra sign is compensated by the fact that in the analysis of the sign leading to \eqref{eqsign2}, 
the total number of new circles and bumping squares above or below $y_j$ is now odd (the bumping square coming from the action
of $e_n$ in the row of $y_j$ is not counted since it is neither above or below $y_j$), which leads to   
$$
 (-1)^{\#(\Omega/\Lambda^{(j)})+\#(\Lambda^{(j)}/\Lambda)} = (-1)^{\#(\Omega/\Lambda)+j}
$$
Hence,  \eqref{geneq1} still holds. 

In the action of $(\tilde e_0-\tilde Q) e_n$ the contribution is zero since
$\tilde e_0-\tilde Q$ cannot add a new circle in the position corresponding to $y_j$.  Again, this is consistent with
\begin{equation} \label{eqzero0}
\lim_{{\rm Rel}_{\Omega/\Lambda}} \prod (y_{j_r}-x_{i_r}-\alpha) \prod (x_{I_\ell}-y_{J_\ell}) 
 \Biggl[ \left(1-  \frac{y_j+1-\alpha}{\alpha}\right) \left( \prod_{i=1}^n \frac{x_i-y_j+\alpha-1}{x_i-y_j+\alpha} \right)\Biggr] {\rm Det}'_{\Omega^{(j)}/\Lambda} =0 
\end{equation}
since  the relation $x_a=y_j$ in ${\rm Rel}_{\Omega/\Lambda}$ implies that there will be a factor
$x_a-y_j=0$ in the numerator.

Suppose finally that $y_j$ corresponds to a new square (also labeled by $x_a$) with new squares or a bumping square above it in its column.
In this case  both in the action of $e_n \tilde Q $ and $(\tilde e_0 -\tilde Q)e_n$
the contribution is zero since neither
$\tilde Q$ or $\tilde e_0 -\tilde Q$ can put a new circle in the position corresponding to $y_j$.  Again, as in \eqref{eqzero} and
\eqref{eqzero0}, this is consistent with
\begin{equation}
\lim_{{\rm Rel}_{\Omega/\Lambda}} \prod (y_{j_r}-x_{i_r}-\alpha) \prod (x_{I_\ell}-y_{J_\ell}) 
 \Biggl[ 
   \left(\frac{y_j+1-\alpha}{\alpha}\right) \left( \prod_{i=1}^n \frac{x_i-y_j-1}{x_i-y_j} \right) \Biggr] {\rm Det}'_{\Omega/\Lambda^{(j)}}
 = 0
\end{equation}
and
\begin{equation}
  \lim_{{\rm Rel}_{\Omega/\Lambda}} \prod (y_{j_r}-x_{i_r}-\alpha) \prod (x_{I_\ell}-y_{J_\ell}) 
 \Biggl[ \left(1-  \frac{y_j+1-\alpha}{\alpha}\right) \left( \prod_{i=1}^n \frac{x_i-y_j+\alpha-1}{x_i-y_j+\alpha} \right)\Biggr] {\rm Det}'_{\Omega^{(j)}/\Lambda} =0 
\end{equation}

\subsection{Second proposition}
As described in the proof of Theorem~\ref{theo}, in order to finish our proof of the Pieri rule for Jack polynomials in superspace, we also need to prove
the induction step for the Pieri rule in the case of $e_n$. 
\begin{proposition} \label{mainprop2}
If the Pieri rules hold for $\tilde e_{n-1}$, then they also hold for
$e_n$.
\end{proposition}
The proof of the proposition  can be done by following the steps of the proof of Proposition~\ref{mainprop}.  Since this is a very
straightforward but tedious task, we will omit its presentation in this article.

\section{Conjectured Pieri rules for the Macdonald polynomials in superspace} \label{secMac}

The Pieri rules for the Macdonald polynomials in superspace appear to be very similar to the ones for the Jack polynomials in superspace.
Before stating the conjecture, we need to introduce the quantity $d(\Omega/\Lambda)$.  Read the labels $x$ and $y$ from top to bottom to form a word $w$ in the letters $x_i$ and $y_j$ (if $x_\ell$ and $y_r$ are in the same row, then
$y_r$ is read first).  Then let
\begin{equation}
d(\Omega/\Lambda)  = d_1(\Omega/\Lambda) + d_2(\Omega/\Lambda)  + \cdots + d_n(\Omega/\Lambda) 
\end{equation}
where
$$
d_i(\Omega/\Lambda) = \# \{ y_j \text{ to the right of } x_i \, |\,  j\leq i  \} -  \# \{ y_j \text{ to the left of } x_i \, |\,  j>i  \} 
$$
In some sense,  $d(\Omega/\Lambda)$ measures the distance between $w$ and the words $w_0=y_1 x_1 y_2 x_2 \cdots y_n x_n$ or $w_0=y_1 x_1 y_2 x_2 \cdots y_n x_n y_{n+1}$ (depending on whether $\Omega/\Lambda$ is a vertical $n$-strip or a vertical $\tilde n$-strip),
where elementary transpositions of the type $(x,y) \to (y,x)$ count as -1 and those of the type $(y,x) \to (x,y)$ count as +1.  For instance, if the strip
$\Omega/\Lambda$ is given by
$${\scriptsize \tableau[scY]{ \bl  &\bl& \bl  &\bl &\bl \bl  &\bl \tf \circ \\  \bl  &\bl &\bl  &\bl \bl    &\bl \cerclep \\  \bl  &\bl \bl  &\bl \bl  &\bl \cerclep \\  \bl  &\bl \bl  &\bl \tf \circ \\   \bl &\bl \tf \circ \\     \bl   \cerclep}}$$
then $w=x_1 y_1 y_2 x_2 x_3 y_3$, which gives $d_1=1$, $d_2=0$ and $d_3=1$ for a total value of $d(\Omega/\Lambda)=2$.

\begin{conjecture} \label{conj}
  The Pieri rules for the Macdonald polynomials in superspace are given by
\begin{equation} \label{pierimac}
e_n \, P_\Lambda^{(q,t)} = \sum_\Omega v_{\Lambda \Omega}(q,t) \, P_\Omega^{(q,t)} 
\qquad {\rm and} \qquad \tilde e_n \, P_\Lambda^{(q,t)} = \sum_\Omega \tilde v_{\Lambda \Omega}(q,t) \, P_\Omega^{(q,t)} 
\end{equation}
where the sum is over all $\Omega$'s such that $\Omega/\Lambda$ is a vertical $n$-strip and a vertical  $\tilde n$-strip
respectively.  Moreover,
\begin{equation} \label{eqPieriMac}
v_{\Lambda \Omega}(q,t)= (-1)^{\#(\Omega/\Lambda)}  t^{d(\Omega/\Lambda)} 
\psi_{\Omega/\Lambda}' {\rm Det_{\Omega/\Lambda}}  \qquad {\rm and} \qquad
 \tilde v_{\Lambda \Omega}(q,t)=  (-1)^{\#(\Omega/\Lambda)} t^{d(\Omega/\Lambda)}  \psi_{\Omega/\Lambda}' {\rm Det_{\Omega/\Lambda}} 
\end{equation}
where $\psi_{\Omega/\Lambda}'$ and  ${\rm Det_{\Omega/\Lambda}}$ are as described in Section~\ref{SecPieri}, but with 
$h_\Lambda^{(\alpha)}(s)$ and $h^\Lambda_{(\alpha)}(s)$ replaced respectively by
\begin{equation}
h_\Lambda^{(q,t)}(s)= 1-t^{\ell_{\Lambda^\circledast}(s)}q^{a_{\Lambda^*}(s)+1} \qquad {\rm and} \qquad
h^\Lambda_{(q,t)}(s)= 1-t^{\ell_{\Lambda^*}(s) + 1}q^{a_{\Lambda^\circledast}(s)} 
\end{equation}
and $[x;y]_{(\alpha)}$ replaced by
\begin{equation} \label{xyqt}
[x;y]_{(q,t)} = \frac{(t-1)(q^{1/2}-q^{-1/2})}{(t^{(x-y)/2}-t^{-(x-y)/2})(q^{1/2}t^{(x-y)/2}-q^{-1/2}t^{-(x-y)/2})}
\end{equation}
\end{conjecture}  
It is easy to see that Conjecture~\ref{conj} becomes Theorem~\ref{theo}
in the Jack limit $q=t^\alpha, t\to 1$.  The main reason we cannot prove  Conjecture~\ref{conj}
by simply extending the proof of Theorem~\ref{theo} is that
$q$-analogs of the operators $q^\perp$ and
$Q$ need to be used (the operators $q^\perp$ and $Q$  do not have a simple 
action on Macdonald polynomials in superspace).  We believe that such $q$-analogs belong to an
extension of the super-Poincar\'e algebra defined in 
\cite{BDM}.  Another drawback is that the explicit action of  $\tilde e_0$ on the Macdonald polynomials in superspace, as well as (obviously)
that of the still unknown $q$-analogs of $q^\perp$ and $Q$, has not yet been  established.   In order to prove
their explicit action, it is thus first necessary to 
prove the formulas for the norm and the evaluation of the Macdonald polynomials in superspace that were
conjectured in \cite{BDLM1} (these formulas involve products of $h_{(q,t)}^\Lambda$ and $h^{(q,t)}_\Lambda$ 
just as in the non-supersymmetric case).  This problem is being considered in \cite{GL}.

Another approach to prove Conjecture~\ref{conj} would be to generalize the proof of the Pieri rules for Macdonald polynomials that can be found in \cite{Mac} (and which is due to Koornwinder \cite{K}).
This proof relies on the explicit action of the Macdonald operators $D_n^r$ to prove at the same time the symmetry and the Pieri rules
for the Macdonald polynomials.  Although there is a conjectured version of the symmetry for Macdonald polynomials in superspace \cite{B},
it does not seem straightforward to generalize this approach given that the analogs of the Macdonald operators in superspace are not known explicitly (they are only known as expressions involving Cherednik operators \cite{BDLM2}).

In the limits $q=t=0$ and $q=t=\infty$, the Macdonald polynomials in superspace $P_\Lambda^{(q,t)}$ become the Schur functions in superspace $s_\Lambda$
and $\bar s_\Lambda$ respectively.  The corresponding Pieri rules for $s_\Lambda$
and $\bar s_\Lambda$, which surprisingly have coefficients always equal to $1$, $-1$ or 0,
have been established in  \cite{BM,JL}.  One would thus expect to be easily
able to
give extra support for Conjecture~\ref{conj} by recovering the Schur Pieri rules
in the specials cases $q=t=0$ and $q=t=\infty$.
Unfortunately, since obtaining those limiting cases proved not to be so trivial, we decided not to include them in this article.

\section{Connection with the 6-vertex model} \label{sec6vertex}

Quite remarkably, 
the determinant ${\rm Det}_{\Omega/\Lambda}$ appearing in the Pieri rules for the Macdonald polynomials in the case
where $\Omega/\Lambda$ is a vertical $n$-strip is, as we will see,
essentially the Izergin-Korepin determinant related
to the partition function of the 6-vertex model \cite{Bre,Ize,Ku}, probably the most fundamental exactly solvable model in
statistical mechanics.  This model is also known as square ice given that it can be interpreted as molecules of water 
(${\rm H}_2{\rm O}$) on a two-dimensional square lattice 
(see Figure~\ref{fig1}, where the 6 configurations are the 2 straight ones, vertical and horizontal, and the 4 possible orientations of the bent molecule).
\begin{figure}
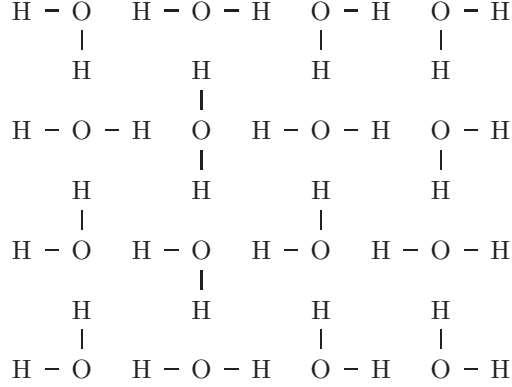

\caption{A square ice configuration}
\label{fig1}
{
$$
\dgARROWLENGTH=0.5em
\begin{diagram}
\node {\rm H } \node{ \rm O}\arrow{w,-}{} \arrow{s,-}{} \node{ \rm  H}  \node{ \rm  O}\arrow{w,-}{}\arrow{e,-}{} \node{ \rm  H} \node{ \rm  O} \arrow{s,-}{} \arrow{e,-}{}\node{ \rm  H} \node{ \rm  O} \arrow{s,-}{} \arrow{e,-}{}\node{ \rm  H} \\
\node {} \node{ \rm H } \node{ \rm  }  \node{ \rm  H} \node{ \rm  } \node{ \rm  H}  \node{ \rm  } \node{ \rm  H}  \node{ \rm  } \\
\node { \rm H } \node{ \rm  O}\arrow{w,-}{} \arrow{e,-}{} \node{ \rm  H}  \node{ \rm  O}\arrow{n,-}{}\arrow{s,-}{} \node{ \rm  H} \node{ \rm  O} \arrow{w,-}{} \arrow{e,-}{}\node{ \rm  H} \node{ \rm  O} \arrow{s,-}{} \arrow{e,-}{}\node{ \rm  H} \\
\node {} \node{ \rm H } \node{ \rm  }  \node{ \rm  H} \node{ \rm  } \node{ \rm  H}  \node{ \rm  } \node{ \rm  H}  \node{ \rm  } \\
\node { \rm H } \node{ \rm  O}\arrow{n,-}{} \arrow{w,-}{} \node{ \rm  H}  \node{ \rm  O}\arrow{w,-}{}\arrow{s,-}{} \node{ \rm  H} \node{ \rm  O} \arrow{w,-}{} \arrow{n,-}{}\node{ \rm  H} \node{ \rm  O} \arrow{w,-}{} \arrow{e,-}{}\node{ \rm  H} \\
\node {} \node{ \rm H } \node{ \rm  }  \node{ \rm  H} \node{ \rm  } \node{ \rm  H}  \node{ \rm  } \node{ \rm  H}  \node{ \rm  }  \\
\node { \rm H } \node{ \rm  O}\arrow{n,-}{} \arrow{w,-}{} \node{ \rm  H}  \node{ \rm  O}\arrow{w,-}{}\arrow{e,-}{} \node{ \rm  H} \node{ \rm  O} \arrow{e,-}{} \arrow{n,-}{}\node{ \rm  H} \node{ \rm  O} \arrow{e,-}{} \arrow{n,-}{}\node{ \rm  H} 
\end{diagram}
$$}
\end{figure}
When the domain wall boundary conditions are imposed
($H$ atoms along the left and right edges and no $H$ atom along the top and bottom edges), the square ice configurations are in correspondence with
alternating sign matrices \cite{Bre,Ku}.  These matrices are square matrices whose row and column sums are equal to 1 and 
whose non-zero entries in each row and column alternate between 1 and -1.
For instance the  alternating sign matrix corresponding to the configuration in 
Figure~\ref{fig1} is given by 
$$
\left(\begin{array}{ccccc}
0 & {1} & 0 & 0 & 0\\
{1}& {-1} & 0 & {1}  & 0 \\
0 & 0 & {1} & {-1} & {1}\\
0 &  {1}& 0 & 0 & 0\\
0 & 0 & 0 & {1} & 0
\end{array}
\right)
$$  
where horizontal (resp. vertical) water molecules are now 1's (resp. -1's) while the bent water molecules are now 0's.

The Izergin-Korepin determinant related to the partition function of the 6-vertex model with domain wall boundary conditions is
 \cite{Bre,Ize,Ku}
\begin{equation}
  D( \mathbf x,\mathbf y; a)= \det \left(  \frac{(a-a^{-1})^2}{\Bigl(\mathbf x_i/\mathbf y_j- (\mathbf x_i/\mathbf y_j)^{-1}\Bigr)\Bigl(a\mathbf x_i/\mathbf y_j-
(a\mathbf x_i/\mathbf y_j)^{-1}  \Bigr)} \right)_{1\leq i,j \leq n}
\end{equation}
For convenience, let
\begin{equation} \label{eqdp}
D'( \mathbf x,\mathbf y; a)=   \left(\frac{t-1}{q^{1/2}-q^{-1/2}}\right)^n D( \mathbf x,\mathbf y; a)
\end{equation}
When $\Omega/\Lambda$ is a vertical $n$-strip such that
the $x$ and $y$ labels are never in the same row, we have immediately
from \eqref{xyqt} that
\begin{equation}
{\rm Det}_{\Omega/\Lambda}= D'( \mathbf x,\mathbf y; a)  \quad \text{if }\, \, \mathbf x_i = t^{x_i/2}, \mathbf y_j = t^{y_j/2} \text{ and } a=q^{1/2}
\end{equation}
In the general case, if we extend Remark~\ref{remark} to the Macdonald case,  we have again that  ${\rm Det_{\Omega/\Lambda}}$
is related to the  Izergin-Korepin determinant.  Indeed,
it is straightforward to see that when  $\mathbf x_i = t^{x_i/2}$, $\mathbf y_j = t^{y_j/2}$ and $a=q^{1/2}$, we get that
\begin{equation}
 {\rm Det_{\Omega/\Lambda}} =  \lim_{\substack{x_{I_\ell}=y_{J_\ell}\\ x_{i_r}=y_{j_r}-\alpha}}  \prod \frac{(t^{(x_{I_\ell}-y_{J_\ell})/2}-t^{-(x_{I_\ell}-y_{J_\ell})/2})}{(t-1)}\prod \frac{(t^{-(x_{i_r}-y_{j_r}+\alpha)/2}-t^{(x_{i_r}-y_{j_r}+\alpha)/2})}{(t-1)}  \,
 D'( \mathbf x,\mathbf y; a) 
\end{equation}
where as before the pairs $(x_{i_r}, y_{j_r})$ are those such that $x_{i_r}= y_{j_r}-\alpha$ (recall that $q=t^\alpha$),
while the pairs $(x_{I_\ell}, y_{J_\ell})$
are those such that  $x_{I_\ell}= y_{J_\ell}$.

From this connection, it is immediate 
that ${\rm Det_{\Omega/\Lambda}}$ can always be obtained as a weighted sum over alternating sign matrices in the following way \cite{Br}.  Let the weight of the entry in position $s=(i,j)$ in an alternating sign matrix be
\begin{itemize}
\item  $z_{ij}$ (resp. $z_{ij}^{-1}$) if the corresponding entry is  $1$ (resp. $-1$)  \\
\item $[az_{ij}]$ (resp. $[z_{ij}]$) if the corresponding entry is  0 and  
  the sum of the entries in the column above $s$ and in the row to the left of $s$ is even (resp. odd) 
\end{itemize}
where $z_{ij}=\mathbf x_i / \mathbf y_j$, and where
\begin{equation}
[z]= \frac{z-z^{-1}}{a-a^{-1}}
\end{equation}  
The weight $w(\mathcal A)$
of an alternating sign matrix $\mathcal A$ is defined to be the product of the weights of all entries in the matrix.  It is known that
\begin{equation}
  D( \mathbf x,\mathbf y; a) =
  \frac{\prod_{i<j} [\mathbf x_i / \mathbf x_j][\mathbf y_j / \mathbf y_i] }{\prod_{i=1} \mathbf x_i / \mathbf y_i \prod_{i,j} [\mathbf x_i / \mathbf y_j] [a\mathbf x_i / \mathbf y_j] }  \sum_{\mathcal A} w(\mathcal A) 
\end{equation}  
where the sum is over all alternating sign matrices of size $n$. From \eqref{eqdp}, we thus have in our case 
\begin{equation}
  {\rm Det}_{\Omega/\Lambda} =   \left(\frac{t-1}{q^{1/2}-q^{-1/2}}\right)^n 
  \frac{\prod_{i<j} [t^{(x_i-x_j)/2}][t^{(y_j-y_i)/2}] }{\prod_{i=1} t^{(x_i-y_i)/2} \prod_{i,j} [t^{(x_i-y_j)/2}] [q^{1/2}t^{(x_i-y_j)/2}] }  \sum_{\mathcal A} w(\mathcal A)
\end{equation}
where $[z]$ now stands for
\begin{equation}
[z]= \frac{z-z^{-1}}{q^{1/2}-q^{-1/2}}
\end{equation}
In the Jack limit  ($q=t^\alpha$ and $t\to 1$), this reduces to
\begin{equation} \label{detasm}
{\rm Det}_{\Omega/\Lambda} = {\alpha^{n^2}} 
  \frac{\prod_{i<j} (x_i-x_j)(y_j-y_i) }{\prod_{i,j} (x_i-y_j) (x_i-y_j+\alpha)}  \sum_{\mathcal A} w_\alpha(\mathcal A)
\end{equation}
where $w_\alpha(\mathcal A)$, the Jack limit of $w(\mathcal A)$, is such that 
the weight of the entry in position $s=(i,j)$ 
is 1 if the corresponding entry is  $1$ or $-1$,  and  $(x_i-y_j+\alpha)/\alpha$ (resp. $(x_i-y_j)/\alpha$) if the corresponding entry is  0 and  
  the sum of the entries in the column above $s$ and in the row to the left of $s$ is even (resp. odd). 
For example, the  7 alternating sign matrices of size 3 with their corresponding weight
are: 
 \begin{align} 
&  {\scriptsize \left(  \begin{array}{ccc}
  1 & 0 & 0 \\
  0 & 1 & 0 \\
  0 & 0 & 1
\end{array} \right)  \quad    (x_1-y_2) (x_1-y_3) (x_2-y_1) (x_2-y_3)(x_3-y_1) (x_3-y_2) \alpha^{-6}} \nonumber \\
& {\scriptsize \left(  \begin{array}{ccc}
  1 & 0 & 0 \\
  0 & 0 & 1 \\
  0 & 1 & 0
\end{array} \right)   \quad  (x_1-y_2) (x_1-y_3) (x_2-y_1) (x_2-y_2+\alpha)(x_3-y_1) (x_3-y_3+\alpha) \alpha^{-6}} \nonumber\\
& {\scriptsize \left(  \begin{array}{ccc}
  0 & 1 & 0 \\
  1 & 0 & 0 \\
  0 & 0 & 1
\end{array} \right)  \quad  (x_1-y_1+\alpha) (x_1-y_3) (x_2-y_2+\alpha) (x_2-y_3)(x_3-y_1) (x_3-y_2) \alpha^{-6}} \nonumber\\
   &
  {\scriptsize \left(  \begin{array}{ccc}
  0 & 1 & 0 \\
  0 & 0 & 1 \\
  1 & 0 & 0
\end{array} \right) \quad  (x_1-y_1+\alpha) (x_1-y_3) (x_2-y_1+\alpha) (x_2-y_2)(x_3-y_2+\alpha) (x_3-y_3+\alpha) \alpha^{-6}} \nonumber\\
& {\scriptsize \left(  \begin{array}{ccc}
  0 & 0 & 1 \\
  0 & 1 & 0 \\
  1 & 0 & 0
\end{array} \right) \quad  (x_1-y_1+\alpha) (x_1-y_2+\alpha) (x_2-y_1+\alpha) (x_2-y_3+\alpha)(x_3-y_2+\alpha) (x_3-y_3+\alpha) \alpha^{-6}} \nonumber\\
& {\scriptsize \left(  \begin{array}{ccc}
  0 & 0 & 1 \\
  1 & 0 & 0 \\
  0 & 1 & 0
\end{array} \right)   \quad  (x_1-y_1+\alpha) (x_1-y_2+\alpha) (x_2-y_2) (x_2-y_3+\alpha)(x_3-y_1) (x_3-y_3+\alpha) \alpha^{-6}} \nonumber\\
& {\scriptsize \left(  \begin{array}{ccc}
  0 & 1 & 0 \\
  1 & -1 & 1 \\
  0 & 1 & 0
\end{array} \right)  \quad  (x_1-y_1+\alpha) (x_1-y_3) (x_3-y_1) (x_3-y_3+\alpha) \alpha^{-4}} \nonumber
\end{align}  
Using $\Lambda$ and $\Omega$ given in \eqref{spartsex}, we have
$x_1=7\alpha-1$, $x_2=5\alpha-3$, $x_3=4\alpha-4$, $y_1=6\alpha-2$,
$y_2=3\alpha-5$ and $y_3=\alpha-7$.  Substituting these values in
the weights of the 7 alternating sign matrices of size 3 gives
\begin{equation}
\sum_{\mathcal A} w_\alpha(\mathcal A)= -(416\alpha^6+2000\alpha^5+3484\alpha^4+2608\alpha^3+559\alpha^2-256\alpha-108)\alpha^{-6}
\end{equation}
which is, up to a coefficient $-1/\alpha^6$, the non-linear factor in \eqref{ugly} (the rest of the coefficients in the right-hand side of \eqref{detasm} are linear factors).
In fact, before realizing that such a non-linear factor was 
essentially a determinant, we observed that it could naturally be expressed as a weighted sum over
alternating sign-matrices.  This is what led us to suspect a relation with the 6-vertex model, from which we deduced the determinantal form
of the non-linear factors.  In retrospect, 
we ended up with a somewhat unexpected connection between Pieri rules for
Jack and Macdonald polynomials in superspace and alternating sign matrices.
We hope that this will translate into new insight into the combinatorics of alternating sign matrices.

\end{proof}

\begin{acknow}  The authors would like to thank C. Gonz\'alez for useful discussions.
This work was supported by FONDECYT (Fondo Nacional de Desarrollo Cient\'{\i}fico y Tecnol\'ogico de Chile) {postdoctoral grant \#{3160595}} (J. G.) and regular grant \#{1170924} (L. L.).
 \end{acknow}  

\begin{appendix} 
  \section{Dual Pieri rules}  \label{SecPieridual}

  In this section we derive the dual Pieri rules involving an $\alpha$-deformation of the homogeneous symmetric functions in superspace.  The notation is that
  of Section~\ref{SecPieri}.

Let $\hat \omega_\alpha$ be the homomorphism defined on the power-sums by
\begin{equation}
\hat \omega_\alpha(p_n)= (-1)^{n-1} \alpha\, p_n
\qquad {\rm and} \qquad  \hat \omega_\alpha(\tilde p_\ell)= (-1)^{n} \alpha\, \tilde p_\ell 
\end{equation}  
for  $n=1,2,\dots$ and  $\ell=0,1,2,\dots$.  It is known to induce the following
duality on the Jack polynomials in superspace \cite{DLM3}: 
\begin{equation} \label{dualP}
  \hat \omega_\alpha (P_\Lambda^{(\alpha)}) = ||P_\Lambda^{(\alpha)}||^2 \,
  P_{\Lambda'}^{(1/\alpha)}
\end{equation}
where the norm-squared $||P_\Lambda^{(\alpha)}||^2$ is explicitly given by 
\begin{equation}
 \langle \langle P_{\Lambda}^{(\alpha)}, P_{\Lambda}^{(\alpha)} \rangle \rangle_\alpha =: ||P_\Lambda^{(\alpha)}||^2 = \alpha^m \prod_{s \in \Lambda^*}\frac{h_\Lambda^{(\alpha)}(s)}{h^\Lambda_{(\alpha)}(s)}
\end{equation}
with $m$ the fermionic degree of $\Lambda$.
In the special cases $\Lambda=(\emptyset;n)$ and  $\Lambda=(0;\ell)$, the duality
gives respectively
\begin{equation} \label{eqduale}
\hat \omega_\alpha (P_{(\emptyset;n)}^{(\alpha)}) = ||P_{(\emptyset;n)}^{(\alpha)}||^2 \,
P_{(\emptyset;1^n)}^{(1/\alpha)}= e_n \qquad {\rm and} \qquad
\hat \omega_\alpha (P_{(0;\ell)}^{(\alpha)}) = ||P_{(0;\ell)}^{(\alpha)}||^2 \,
P_{(0;1^\ell)}^{(1/\alpha)}= \tilde e_\ell
\end{equation}  
since $P_{(\emptyset;1^n)}^{(1/\alpha)}= m_{(\emptyset; 1^n)}=e_n$ and
$P_{(0;1^\ell)}^{(1/\alpha)}= m_{(0; 1^\ell)}=\tilde e_\ell$ by the triangularity
of the Jack polynomials in superspace (see \eqref{defjack}).
It is thus natural to define
\begin{equation}
  g_n^{(\alpha)} = \frac{1}{||P_{(\emptyset;n)}^{(\alpha)}||^2} \, P_{(\emptyset;n)}^{(\alpha)}
  \qquad {\rm and} \qquad
 \tilde  g_\ell^{(\alpha)} = \frac{1}{||P_{(0;\ell)}^{(\alpha)}||^2} \, P_{(\emptyset;\ell)}^{(\alpha)} 
\end{equation}
which by \eqref{eqduale} are now such that
\begin{equation} \label{dualg}
  \hat \omega_\alpha (g_n^{(\alpha)})=e_n \qquad {\rm and} \qquad
  \hat \omega_\alpha (\tilde g_\ell^{(\alpha)})=\tilde e_\ell
\end{equation}  
We should note that $g_n^{(\alpha)}$ and $\tilde g_\ell^{(\alpha)}$
are respectively equal to the homogeneous symmetric functions in superspace
$h_n$ and $\tilde h_\ell$ (see \eqref{defhomogeneous}) when $\alpha=1$.
As such, they can be considered $\alpha$-deformations of the homogeneous symmetric functions in superspace.  They
turn out to have simple expansions in terms of power sums \cite{DLM2}:
\begin{equation}
  g_n^{(\alpha)} = \sum_{\Lambda \vdash (n|0)} \frac{1}{\alpha^{\ell(\Lambda)} \, z_{\Lambda^s}} \, p_\Lambda \qquad {\rm and} \qquad
  \tilde g_\ell^{(\alpha)} = \sum_{\Lambda \vdash (\ell|1)} \frac{1}{\alpha^{\ell(\Lambda)} \, z_{\Lambda^s}} \, p_\Lambda 
\end{equation}

We will now use the duality to obtain the dual Pieri rules
\begin{equation} \label{pierig}
g_n^{(\alpha)} \, P_\Lambda^{(\alpha)} = \sum_\Omega u_{\Lambda \Omega}(\alpha) \, P_\Omega^{(\alpha)} 
\qquad {\rm and} \qquad \tilde g_n^{(\alpha)} \, P_\Lambda^{(\alpha)} = \sum_\Omega \tilde u_{\Lambda \Omega}(\alpha) \, P_\Omega^{(\alpha)} 
\end{equation}
Apart from a sign, the contributions will be again of two types: a factorized part $\varphi_{\Omega/\Lambda}$ and a determinant ${\rm Det}'_{\Omega/\Lambda}$.
We first describe the factorized part $\varphi_{\Omega/\Lambda}$.  Recall that
${\rm col}_{\Omega/\Lambda}$ denotes the cells of $\Lambda^{\circledast}$ that belong to a column of $\Omega$ containing a non-preexisting cell.   We have that
\begin{equation}
\varphi_{\Omega/\Lambda}  = \prod_{s \in {\rm col}_{\Omega/\Lambda}} \bar c_{\Omega/\Lambda}(s)
\end{equation}
where the contribution $\bar c_{\Omega/\Lambda}(s)$
of a cell $s$ (whose row contains for illustrative purposes all the possible types of cells) is equal to $A$, $B$, or $C$ according to how its column ends:
\begin{equation} \label{fourcases}
  {    \footnotesize\tableau[scY]{ A  &   &\bl $\ldots$  & & \tf \ocircle & \fl & \bl \cerclep \\  \\ \bl \vspace{-2ex}\vdots \\ \bl & \bl \\  & \bl\\  \bl \cerclep     }
    \qquad \qquad
 \footnotesize\tableau[scY]{ C  &   &\bl $\ldots$  & & \tf \ocircle & \fl & \bl \cerclep \\  \\ \bl \vspace{-2ex}\vdots \\ \bl & \bl \\  & \bl\\  \fl  }
 \qquad \qquad
 \footnotesize\tableau[scY]{ B  &   &\bl $\ldots$  & & \tf \ocircle & \fl & \bl \cerclep \\  \\ \bl \vspace{-2ex}\vdots \\ \bl & \bl \\  & \bl\\
   \tf \ocircle  }
 \qquad \qquad
  \footnotesize\tableau[scY]{ C  &   &\bl $\ldots$  & & \tf \ocircle & \fl & \bl \cerclep \\  \\ \bl \vspace{-2ex}\vdots \\ \bl & \bl \\  & \bl\\
   \tf \ocircle \\ \bl \cerclep }
  }
\end{equation}
where $A$, $B$ and $C$ are such as defined in Section~\ref{SecPieri}.
When comparing to  \eqref{sixcases}, the conjugated versions of the two
first cases seem to be missing.  This is due to the fact that the contribution in those cases is now equal to 1 since the corresponding
columns do not contain a non-preexisting cell.

As for the  determinant ${\rm Det'_{\Omega/\Lambda}}$, it is given respectively by
\begin{equation}
  {\rm Det'_{\Omega/\Lambda}} = {\rm Det_{\Omega'/\Lambda'}(1/\alpha)} \qquad {\rm or~by}
  \qquad  {\rm Det'_{\Omega/\Lambda}} = \alpha \, {\rm Det_{\Omega'/\Lambda'}(1/\alpha)}
\end{equation}
whenever $\Omega$ has an extra circle.
The labels $x_i$ and $y_j$ in ${\rm Det}'_{\Omega/\Lambda}$ are thus
naturally taken to
increase from bottom to top instead of from top to bottom while
the determinant has entries
\begin{equation} \label{defxydual}
[x_i;y_j]_{1/\alpha} =
\left \{
\begin{array}{cl}
\dfrac{\alpha}{(y_{i}-x_{j}+1)(y_{i}-x_{j})} & \text{~if there is no~} y  \text{~label in the column of}~x_i \\
1 & \text{~if~} x_i \text{~and~} y_j \text{~are in the same column} \\
0 & \text{~if there is a label~} y_\ell \neq y_j  \text{~in the column of~} x_i 
\end{array} \right .
\end{equation} 
We then have explicitly that 
\begin{equation}
{\rm Det'_{\Omega/\Lambda}} =
\left|
\begin{array}{cccc}
\displaystyle{[x_1; y_1]_{1/\alpha}} & \displaystyle{[x_1; y_2]_{1/\alpha}}  &\cdots & \displaystyle{[x_1; y_n]_{1/\alpha}}  \\ 
\displaystyle{[x_2; y_1]_{1/\alpha}}  & \displaystyle{[x_2; y_2]_{1/\alpha}}  & \cdots & \displaystyle{[x_2; y_n]_{1/\alpha}}  \\
\vdots& \vdots& \ddots& \vdots \\
\displaystyle{[x_n; y_1]_{1/\alpha}}  & \displaystyle{[x_n; y_2]_{1/\alpha}}  & \cdots & \displaystyle{[x_n; y_n]_{1/\alpha}}  
\end{array}
\right| 
\end{equation}
or
\begin{equation}
{\rm Det'_{\Omega/\Lambda}} =
\left|
\begin{array}{cccc}
\alpha & \alpha & \cdots & \alpha \\
\displaystyle{[x_1; y_1]_{1/\alpha}} & \displaystyle{[x_1; y_2]_{1/\alpha}}  &\cdots & \displaystyle{[x_1; y_{n+1}]_{1/\alpha}}  \\ 
\displaystyle{[x_2; y_1]_{1/\alpha}}  & \displaystyle{[x_2; y_2]_{1/\alpha}}  & \cdots & \displaystyle{[x_2; y_{n+1}]_{1/\alpha}}  \\
\vdots& \vdots& \ddots& \vdots \\
\displaystyle{[x_n; y_1]_{1/\alpha}}  & \displaystyle{[x_n; y_2]_{1/\alpha}}  & \cdots & \displaystyle{[x_n; y_{n+1}]_{1/\alpha}}  
\end{array}
\right| 
\end{equation}
when $\Omega$ has an extra circle.
We now have all the tools to state the dual Pieri rules.
\begin{theorem} \label{theodual} The dual Pieri rules for the Jack polynomials in superspace are given by
  \begin{equation} 
g_n^{(\alpha)} \, P_\Lambda^{(\alpha)} = \sum_\Omega u_{\Lambda \Omega}(\alpha) \, P_\Omega^{(\alpha)} 
\qquad {\rm and} \qquad \tilde g_n^{(\alpha)} \, P_\Lambda^{(\alpha)} = \sum_\Omega \tilde u_{\Lambda \Omega}(\alpha) \, P_\Omega^{(\alpha)} 
  \end{equation}
  where the sum is over all $\Omega$'s such that $\Omega/\Lambda$ is  a horizontal $n$-strip and
  a horizontal $\tilde n$-strip  respectively.  Moreover,
\begin{equation} \label{eqPieriexu}
u_{\Lambda \Omega}(\alpha)= (-1)^{\#(\Omega/\Lambda)'}
\varphi_{\Omega/\Lambda} {\rm Det'_{\Omega/\Lambda}}  \qquad {\rm and} \qquad
 \tilde v_{\Lambda \Omega}(\alpha)=  (-1)^{\#(\Omega/\Lambda)'} \varphi_{\Omega/\Lambda} {\rm Det'_{\Omega/\Lambda}}
\end{equation}
where the quantity ${\#(\Omega/\Lambda)'}$ 
stands for the sum 
of the number of preexisting circles and new squares  that lie below each new circle and each bumping square.
\end{theorem}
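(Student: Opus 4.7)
The plan is to deduce Theorem~\ref{theodual} directly from Theorem~\ref{theo} by transporting it through the duality $\hat\omega_\alpha$ of \eqref{dualP}--\eqref{dualg}. Writing
\begin{equation*}
g_n^{(\alpha)}\, P_\Lambda^{(\alpha)} \;=\; \sum_\Omega u_{\Lambda\Omega}(\alpha)\, P_\Omega^{(\alpha)}
\end{equation*}
and applying $\hat\omega_\alpha$ term-by-term (using that $\hat\omega_\alpha$ is a homomorphism, $\hat\omega_\alpha(g_n^{(\alpha)}) = e_n$, and $\hat\omega_\alpha(P_\Gamma^{(\alpha)}) = \|P_\Gamma^{(\alpha)}\|^2\, P_{\Gamma'}^{(1/\alpha)}$), the identity becomes
\begin{equation*}
\|P_\Lambda^{(\alpha)}\|^2\, e_n\, P_{\Lambda'}^{(1/\alpha)} \;=\; \sum_\Omega u_{\Lambda\Omega}(\alpha)\, \|P_\Omega^{(\alpha)}\|^2\, P_{\Omega'}^{(1/\alpha)}.
\end{equation*}
Expanding the left-hand side via Theorem~\ref{theo} at parameter $1/\alpha$ and matching coefficients of $P_{\Omega'}^{(1/\alpha)}$ yields
\begin{equation*}
u_{\Lambda\Omega}(\alpha) \;=\; \frac{\|P_\Lambda^{(\alpha)}\|^2}{\|P_\Omega^{(\alpha)}\|^2}\, v_{\Lambda'\Omega'}(1/\alpha).
\end{equation*}
In particular this coefficient is nonzero precisely when $\Omega'/\Lambda'$ is a vertical $n$-strip, i.e.\ when $\Omega/\Lambda$ is a horizontal $n$-strip. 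The argument for $\tilde g_n^{(\alpha)}$ is identical, with $e_n$ replaced by $\tilde e_n$ and vertical $n$-strip replaced by vertical $\tilde n$-strip.

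The remaining task is to rewrite the right-hand side in terms of data attached to $\Lambda$ and $\Omega$ themselves rather than to their conjugates. From the basic identities $\ell_{\Lambda'}(s') = a_\Lambda(s)$ and $a_{\Lambda'}(s') = \ell_\Lambda(s)$ one immediately computes
\begin{equation*}
h_{\Lambda'}^{(1/\alpha)}(s') \;=\; \tfrac{1}{\alpha}\, h^\Lambda_{(\alpha)}(s), \qquad h^{\Lambda'}_{(1/\alpha)}(s') \;=\; \tfrac{1}{\alpha}\, h_\Lambda^{(\alpha)}(s),
\end{equation*}
so that the three basic contributions swap under conjugation as $A \leftrightarrow 1/B$, $B \leftrightarrow 1/A$ and $C \leftrightarrow 1/C$. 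Combining this with the hook-length expression
\begin{equation*}
\|P_\Lambda^{(\alpha)}\|^2 \;=\; \alpha^{m}\prod_{s \in \Lambda^*}\frac{h_\Lambda^{(\alpha)}(s)}{h^\Lambda_{(\alpha)}(s)},
\end{equation*}
the hooks attached to cells outside the columns of the non-preexisting cells of $\Omega/\Lambda$ cancel telescopically between the norm ratio and $\psi_{\Omega'/\Lambda'}'(1/\alpha)$. A case-by-case analysis of the surviving columns, reading the four end-of-column pictures in \eqref{fourcases} as conjugates of the six end-of-row pictures in \eqref{sixcases} (the two ``missing'' row patterns correspond to conjugate columns whose contribution is the trivial factor $1$), recovers exactly the factorized part $\varphi_{\Omega/\Lambda}$ of the statement. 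The fermionic-degree discrepancy $\alpha^{m(\Lambda)-m(\Omega)}$ between the two norms accounts for the extra factor of $\alpha$ multiplying ${\rm Det}'_{\Omega/\Lambda}$ in the $\tilde g_n$ case and its absence in the $g_n$ case.

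For the determinant, a cell at $(i,j)$ in $\Lambda$ carries the Jack label $\alpha j - i$, while its conjugate at $(j,i)$ in $\Lambda'$ carries the $1/\alpha$-label $(1/\alpha)i - j = -(1/\alpha)(\alpha j - i)$. Substituting $\tilde x_i = -(1/\alpha) x_i$ and $\tilde y_j = -(1/\alpha) y_j$ into \eqref{defxy} at $1/\alpha$ gives
\begin{equation*}
\frac{1/\alpha}{(\tilde x_i - \tilde y_j + 1/\alpha)(\tilde x_i - \tilde y_j)} \;=\; \frac{\alpha}{(y_j - x_i + 1)(y_j - x_i)},
\end{equation*}
which matches \eqref{defxydual} after the index transposition that is natural under conjugation and with the ``same row''/``same column'' and zero clauses swapping accordingly. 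The reversal of the labeling order from top-to-bottom to bottom-to-top reflects the conjugate role of rows and columns, while any row-permutation sign introduced by the relabeling is reabsorbed into the overall exponent, converting $(-1)^{\#(\Omega'/\Lambda')}$ into $(-1)^{\#(\Omega/\Lambda)'}$ under the same conjugation of the strip.

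The main obstacle is the telescoping cancellation between the norm ratio and $\psi_{\Omega'/\Lambda'}'(1/\alpha)$: one must verify column by column that all hooks attached to cells outside ${\rm col}_{\Omega/\Lambda}$ cancel exactly, and that the four patterns in \eqref{fourcases} are recovered with the correct assignment of $A$, $B$ and $C$. This bookkeeping is parallel to, but considerably lighter than, the case analysis carried out in Proposition~\ref{mainprop}, and requires no new ingredient beyond the duality together with careful accounting of hooks and signs.
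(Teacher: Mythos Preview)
Your approach is essentially the same as the paper's: both transport Theorem~\ref{theo} through the duality $\hat\omega_\alpha$, obtaining $u_{\Lambda\Omega}(\alpha)$ as $v_{\Lambda'\Omega'}(1/\alpha)$ times a norm ratio, and then do the hook-length and sign bookkeeping via $A'(1/\alpha)=1/B$, $B'(1/\alpha)=1/A$ together with $\#(\Omega/\Lambda)'=\#(\Omega'/\Lambda')$. Your norm ratio $\|P_\Lambda^{(\alpha)}\|^2/\|P_\Omega^{(\alpha)}\|^2$ is in fact the one the duality computation produces (the paper writes the reciprocal), so be careful that the power of~$\alpha$ coming from the fermionic-degree discrepancy is tracked consistently with the definition of ${\rm Det}'_{\Omega/\Lambda}$.
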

\begin{proof}
  Using the duality \eqref{dualP} and \eqref{dualg} together with the
  Pieri rules
\eqref{eqPieriTheo}, we have immediately that
  \begin{equation}
    u_{\Lambda \Omega}(\alpha) = v_{\Lambda' \Omega'}(1/\alpha) \, \frac{|| P_{\Omega}^{(\alpha)}||^2}{|| P_{\Lambda}^{(\alpha)}||^2}\qquad {\rm and } \qquad
     \tilde u_{\Lambda \Omega}(\alpha) = \tilde v_{\Lambda' \Omega'}(1/\alpha) \, \frac{|| P_{\Omega}^{(\alpha)}||^2}{|| P_{\Lambda}^{(\alpha)}||^2}
  \end{equation}
It is easy to check that
  \begin{equation}
\frac{|| P_{\Omega}^{(\alpha)}||^2}{|| P_{\Lambda}^{(\alpha)}||^2}= \prod_{s \in \Omega^*} A(s) B(s)  \qquad  {\rm or }  \qquad   \frac{|| P_{\Omega}^{(\alpha)}||^2}{|| P_{\Lambda}^{(\alpha)}||^2}= \alpha \prod_{s \in \Omega^*} A(s) B(s)
  \end{equation}
depending on whether $\Omega$ has an extra circle (note that
  the dependency of $A$ and $B$ on the cell $s$ has been emphasized).
Using  
\begin{equation}
A'(1/\alpha)= \frac{1}{B} \qquad {\rm and }
\qquad B'(1/\alpha)= \frac{1}{A}
\end{equation}
where the prime indicates that we are considering the conjugate of the superpartitions, we have immediately that
\begin{equation}
\varphi_{\Omega/\Lambda}(\alpha) = \psi_{\Omega'/\Lambda'}'(1/\alpha)  \frac{|| P_{\Omega}^{(\alpha)}||^2}{|| P_{\Lambda}^{(\alpha)}||^2} \qquad {\rm or }
\qquad \alpha\,  \varphi_{\Omega/\Lambda}(\alpha) = \psi_{\Omega'/\Lambda'}'(1/\alpha)  \frac{|| P_{\Omega}^{(\alpha)}||^2}{|| P_{\Lambda}^{(\alpha)}||^2}
\end{equation}
depending as usual on whether $\Omega$ contains an additional circle.  The theorem then follows from the definition of ${\rm Det'_{\Omega/\Lambda}}$ (which takes care of the possible additional $\alpha$) and from the fact that
\begin{equation}
   {\#(\Omega/\Lambda)'} =  {\#(\Omega'/\Lambda')}
\end{equation}  
\end{proof}  

\end{appendix}


\begin{thebibliography}{99}


\bibitem{B}
O. Blondeau-Fournier, Les polyn\^omes de Macdonald dans le superespace et
le mod\`ele Ruijsenaars-Schneider supersym\'etrique, Ph.D. Thesis, Universit\'e Laval, 2014.

\bibitem{BDLM1} O. Blondeau-Fournier, P. Desrosiers, L. Lapointe and P. Mathieu,
\emph{Macdonald polynomials in superspace: conjectural definition and positivity
conjectures}, Lett. Math. Phys. {\bf 101}, 27--47 (2012).

\bibitem{BDLM2} O. Blondeau-Fournier, P. Desrosiers, L. Lapointe and P. Mathieu,
\emph{Macdonald polynomials in superspace as eigenfunctions of commuting operators}, Journal of Combinatorics {\bf 3}, no. 3, 495--562 (2012).

\bibitem{BDM}
O. Blondeau-Fournier, P. Desrosiers, and P. Mathieu, {\it Supersymmetric Ruijsenaars-Schneider Model}, Phys. Rev. Lett. {\bf 114} (2015), 121602. 

\bibitem{BM}
 O. Blondeau-Fournier and P. Mathieu, {\it Schur superpolynomials: Combinatorial definition and Pieri Rule}, SIGMA {\bf 11} (2015), 021, 23pp.

\bibitem{Bre}
D. M. Bressoud, Proofs and confirmations -- The story of the alternating sign matrix conjecture, Cambridge University Press, Cambridge, 1999.

\bibitem{Br} Jean-Fran\c{c}ois Bri\`ere,  Les superpolyn\^omes de Jack et leurs formules de Pieri, Master's Thesis, Universit\'e Laval, 2008.

\bibitem{DLM1} {{ P. Desrosiers}}, L. Lapointe and P. Mathieu, \emph{
Classical symmetric functions in superspace}, 
J. Alg. Combin. {\bf 24}, 209--238 (2006). 

\bibitem{DLM2} {{ P. Desrosiers}}, L. Lapointe and P. Mathieu, \emph{Orthogonality of
    Jack polynomials in superspace}, Adv. Math. {\bf 212}, 361--388 (2007). 

  \bibitem{DLM3} P. Desrosiers, L. Lapointe and P. Mathieu,
\emph{Evaluation and normalization of Jack superpolynomials}, 
IMRN {\bf 23}, 5267--5327 (2012).

\bibitem{DLM4} P. Desrosiers, L. Lapointe and P. Mathieu,
\emph{Jack superpolynomials with negative fractional parameter: clustering properties and super-Virasoro ideals}, Commun. Math. Phys. {\bf 316}, 395--440 (2012). 

\bibitem{FM}
D.\ Z.\ Freedman and P.\ F.\ Mende, {\it An exactly solvable N-particle system in supersymmetric quantum mechanics}, Nucl.\ Phys.\ B 344 (1990) 317--343.

\bibitem{GL} C. Gonz\'alez and L. Lapointe, in preparation.

\bibitem{Ize}
A. G. Izergin, {\it Partition function of a six-vertex model in a finite volume}, Dokl. Akad. Nauk SSSR {\bf 297} (1987), 331--333.  

\bibitem{JL} M. Jones and L. Lapointe, \emph{Pieri rules for the Schur functions in superspace}, J. Combin. Theory Ser. A {\bf 148}, 57--115 (2017).

\bibitem{KN}
A.N. Kirillov and M. Noumi, {\it $q$-Difference raising operators for Macdonald polynomials and the integrality of transition coefficients} in Algebraic methods and $q$-special functions, pp. 227--243, CRM Proc. Lecture Notes 22, Amer. Math. Soc., Providence, 1999.

\bibitem{K} T. Koornwinder, \emph{Self-duality for $q$-ultraspherical polynomials associated with the root system $A_n$}, unpublished manuscript (1988).

\bibitem{Ku}
G. Kuperberg, {\it Another proof of the alternating-sign matrix conjecture}, Internat. Math. Res. Notices (1999), 139--150.


\bibitem{Mac}
               {I.~G.~ Macdonald},
                {\it Symmetric functions and {H}all polynomials},
2nd ed., Clarendon Press, 1995.


\bibitem{Sta} R.~P. Stanley, Enumerative Combinatorics, Vol. 2,
Cambridge Studies in Advanced Mathematics Vol. 62, 
Cambridge
University Press, 1999.

\bibitem{Wa} S. O. Warnaar, {\it Bisymmetric functions, Macdonald polynomials and ${\mathfrak{s} \mathfrak{l}}_3$ basic hypergeometric series}, Compositio Math. {\bf 144} (2008), 271--303.

\end{thebibliography}
\end{document}